\newcommand{\bb}{\mathbb}
\newcommand{\C}{\bb C}
\newcommand{\Z}{\bb Z}
\newcommand{\R}{\bb R}
\newcommand{\RR}{\R}
\newcommand{\bR}{\R}
\newcommand{\Q}{\bb Q}
\newcommand{\T}{\mathbb T}
\newcommand{\bT}{\mathbb T}
\newcommand{\bC}{\mathbb C}
\newcommand{\bZ}{\mathbb Z}
\newcommand{\cI}{\mathcal{I}}
\newcommand{\vol}{\operatorname{vol}}
\newcommand{\covol}{\operatorname{covol}}
\newcommand{\CC}{\mathbb{C}}
\newcommand{\ZZ}{\mathbb{Z}}
\newcommand{\twovector}[2]{\begin{pmatrix} #1 \\ #2 \end{pmatrix}}
\newtheorem{Theorem}{Theorem}
\newtheorem{Cor}[Theorem]{Corollary}
\newtheorem{Prop}[Theorem]{Proposition}
\newtheorem{lemma}[Theorem]{Lemma}
\newtheorem*{lemma*}{Lemma}
\newtheorem*{theorem*}{Theorem}
\theoremstyle{definition} 
\newtheorem{Def}{Definition}
\newtheorem{rem}{Remark}
\numberwithin{equation}{section}
\numberwithin{Def}{section}
\numberwithin{Theorem}{section}
\begin{document}

\title{Counting Tripods on the Torus}
\author{Jayadev S. Athreya}
\author{David Aulicino}
\author{Harry Richman}
\address{Department of Mathematics, University of Washington, Padelford Hall, Seattle, WA 98195, USA}
\email{jathreya@uw.edu}
\address{Department of Mathematics, Brooklyn College,
Room 1156, Ingersoll Hall 
2900 Bedford Avenue 
Brooklyn, NY 11210-2889, USA}
\address{Department of Mathematics, The Graduate Center, CUNY
365 Fifth Avenue
New York, NY 10016
USA}
\email{david.aulicino@brooklyn.cuny.edu}
\address{Department of Mathematics, University of Washington, Padelford Hall, Seattle, WA 98195, USA}
\email{hrichman@uw.edu}

\date{}
  \thanks{J.S.A. was partially supported by NSF CAREER grant DMS 1559860 and NSF DMS 2003528}
    \thanks{D.A. was partially supported by NSF DMS - 1738381 and several PSC-CUNY grants}
\begin{abstract} Motivated by the problem of counting finite BPS webs, we count certain immersed metric graphs, \textit{tripods}, on the flat torus. Classical Euclidean geometry turns this into a lattice point counting problem in $\mathbb C^2$, and we give an asymptotic counting result using lattice point counting techniques.
\end{abstract}

\maketitle

\section{Introduction}\label{sec:intro}

Given two points $z, w$ in the plane $\C$, the set of points $p$ that minimize the sum of distances $|z-p| + |w-p|$ is exactly the line segment connecting $z$ and $w$.  At any point $p$ on this segment, the line segments between $z$ and $p$ and $w$ and $p$ have angle $2\pi/2 = \pi$. Given \emph{three points} $z, w, u$ in $\C$ such that the triangle they form has largest angle at most $2\pi/3$,  the \emph{Fermat point} $p$ minimizes the sum of the lengths $|z-p| + |w-p| + |u-p|$. A classical result in Euclidean geometry says that the angles between the line segments connecting $p$ to $z, w, u$ are $2\pi/3$. We call the configuration of line segments a \emph{tripod}.

An integral lattice point $m+ in \in \Z[i]$ is called \emph{primitive} if $m$ and $n$ are coprime.  A classical result states that as $R$ tends to infinity, the number of primitive points in the circle of radius $R$ centered at the origin grows like $\frac{R^2}{\zeta(2)}$. These points correspond to embedded closed geodesics in the flat torus $\C/\Z[i]$. Inspired by the work of \cite{GMN}, we ask for a similar asymptotic for the number of \emph{immersed} graphs in flat tori that are the projections of tripods whose vertices are at points of a unimodular lattice. 

Let $\Lambda$ be a lattice in $\bC$.  Throughout, we will assume that $\Lambda$ has unit covolume.  A primitive vector in the complex plane descends to a primitive closed trajectory on $\bT = \bC/\Lambda$ with the origin marked.  Analogously, a tripod on the torus will be an immersed tripod from the plane such that the Fermat point descends to a point $p \not= 0$ and all three other endpoints of the line segments descend to $0$.  More formally, $\cI$ is an  isometrically \emph{immersed} copy in $\T$ of a metric graph $\mathcal G  = \mathcal G(\ell_1, \ell_2, \ell_3) \subset \C$ (see Figure~\ref{fig:tripodC}) given by positive parameters $\ell_1, \ell_2, \ell_3$, $$\mathcal G = \{ t: 0 \le t \le \ell_1\} \bigcup \{ t e^{2\pi i/3}: 0 \le t \le \ell_2 \} \bigcup \{ t e^{4\pi i/3}: 0 \le t \le \ell_3 \}.$$

The image of the tripod is in fact an immersed copy of an equiangular $\Theta$-graph; that is, a graph with two vertices with three edges between them. The vertices are the point $0$ and the tripod point $p$, and it is not difficult to check that the line segments must meet with angle $2\pi/3$ at $0$. We will later discuss (see Section~\ref{sec:triprop}) that associated to each tripod is a (minimal) cover of the torus where the tripod becomes embedded.  The degree of the cover is the number of transverse self-intersections of the original tripod, and the embedded tripod gives a representation of the covering torus as an equiangular hexagon with parallel sides identified by translation.

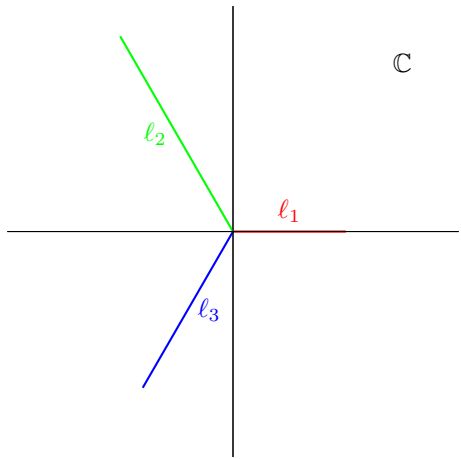
\begin{figure}[h]
\begin{tikzpicture}[scale=1.5]

\draw(-2,0)--(2,0);
\draw(0, -2)--(0,2);

\draw[thick, red](0,0)--(1, 0)node[midway, above]{$\ell_1$};
\draw[thick, green](0,0)--(120:2)node[midway, left]{$\ell_2$};
\draw[thick, blue](0,0)--(240:1.6)node[midway, right]{$\ell_3$};

\draw(-2,0)--(2,0);
\draw(0, -2)--(0,2);

\node at (1.5,1.5){$\C$};

\end{tikzpicture}
\caption{The graph $\mathcal G(\ell_1, \ell_2, \ell_3) \subset \C $.}
\label{fig:tripodC}
\end{figure}

\begin{Def}\label{def:tripod} 
A \textit{tripod} $\downY$ consists of a pair $(\mathcal G(\ell_1, \ell_2, \ell_3), \mathcal I)$, where $\cI: \mathcal G \rightarrow \T$ is an isometric immersion, with $\mathcal I(0) = p$, and $$\mathcal I(\ell_1) = \mathcal I(\ell_2 e^{2\pi i/3} ) =  \mathcal I (\ell_3 e^{4\pi i/3}) = 0.$$ The \emph{length} of the tripod is denoted $\ell(\downY)$, and is given by $$\ell(\downY) = \ell_1 + \ell_2 + \ell_3.$$ We say a tripod $\downY$ is \emph{primitive} if it is not a scaled copy of another tripod.  A primitive tripod is called \emph{reduced}  if the only lattice points lying on its legs are at its endpoints and the tripod point is not at a lattice point.
\end{Def}

The distinction between primitive and reduced lattices is important for future work.  In the case of lattice points, the concepts are identical: a line segment from the origin to a lattice point passes through another lattice point if and only if it is a scaled copy of the vector from the origin to that other lattice point.  However, this is not the case for tripods.  In future work, we plan to analyze tripods on higher genus translation surfaces, which admit cone points.  In the presence of cone points, it is natural to define a tripod as having endpoints at the cone points and no cone points on either of the legs or at the tripod point.  This is essential because the angle at the cone point of a translation surface is only well-defined modulo $2\pi$.

\begin{figure}

\begin{tikzpicture}[scale=2.5]
\draw(0,0)--(0, 1)--(1,1)--(1,0)--cycle;

\draw[thick, blue](0,0)--(0.349, 0.2549);
\draw[thick, red](0.349, 0.2549)--(1,0);
\draw[thick, green](0.349, 0.2549)--(0.17449, 1);
\draw[thick, green](0, 1)--(0.17449, 0);

\end{tikzpicture}
\caption{A (primitive) tripod $\downY$ drawn in a fundamental domain for $\T$.}
\label{fig:tripodexample}
\end{figure}

Given a lattice $\Lambda$, let $$N_{\downY} (R, \Lambda) := \# \{ \downY :  \downY \mbox { primitive with endpoints in } \Lambda \mbox { and } \ell(\downY) \le R\}.$$ 
Furthermore, let $$N_{red, \downY} (R, \Lambda) := \# \{ \downY :  \downY \mbox { reduced with endpoints in } \Lambda \mbox { and } \ell(\downY) \le R\}.$$ 
What is the asymptotic behavior of $N_{\downY}(R, \Lambda)$? Our main result is:

\begin{Theorem}\label{theorem:main}
For all (unit covolume) lattices $\Lambda$ in $\bC$,

$$\lim_{R \rightarrow \infty} \frac{N_{\downY} (R, \Lambda)}{R^4} = \frac{1}{\zeta(4)} \cdot \frac{\sqrt 3 \pi}{ 24 } = \frac{15 \sqrt 3}{4 \pi^3}.$$

\end{Theorem}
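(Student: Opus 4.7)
The plan is to encode tripods as lattice points in $\Lambda \times \Lambda \subset \C^2$ via an explicit change of variables, then apply a lattice-point counting asymptotic together with M\"obius inversion to handle primitivity. A tripod with Fermat point $p$, overall rotation angle $\alpha$, and leg lengths $\ell_1,\ell_2,\ell_3 > 0$ has leg vectors $\vec{v}_i = \ell_i e^{i\alpha}\omega^{i-1}$ where $\omega = e^{2\pi i/3}$; the condition that the leg tips $p + \vec{v}_i$ all lie in $\Lambda$ is equivalent to the two differences $\mu_1 := \vec{v}_1 - \vec{v}_2$ and $\mu_2 := \vec{v}_1 - \vec{v}_3$ both lying in $\Lambda$, after which $p$ is determined modulo $\Lambda$. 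Expanding using $1 + \omega + \omega^2 = 0$ yields the clean length identity
\[
\omega\mu_1 + \mu_2 \;=\; -e^{i\alpha}\omega^2(\ell_1 + \ell_2 + \ell_3), \qquad\text{so}\qquad \ell(\downY) \;=\; |\omega\mu_1 + \mu_2|.
\]
The problem then reduces to counting $(\mu_1,\mu_2) \in \Lambda \times \Lambda$ lying in a certain $4$-dimensional region $\Omega \subset \C^2$ with $|\omega\mu_1+\mu_2|\le R$; the threefold symmetry $(\ell_1,\ell_2,\ell_3,\alpha) \mapsto (\ell_2,\ell_3,\ell_1,\alpha+2\pi/3)$ on the parameter space produces three distinct points of $\Lambda^2$ for each geometric tripod.

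The key analytic step is computing the Jacobian of $(\ell_1,\ell_2,\ell_3,\alpha) \mapsto (\mu_1,\mu_2)$ as a map $\R^4 \to \R^4$. A direct determinant calculation, performed at $\alpha = 0$ and extended to general $\alpha$ by rotational invariance, gives $|\det J| = \tfrac{\sqrt 3}{2}(\ell_1+\ell_2+\ell_3)$. Restricting $\alpha$ to $[0,2\pi/3)$ selects one representative per threefold orbit, and integrating over the positive simplex $\{\ell_i>0,\;\sum\ell_i\le R\}$ using the elementary identity $\int (\ell_1+\ell_2+\ell_3)\,d\ell_1 d\ell_2 d\ell_3 = R^4/8$ gives
\[
\int_0^{2\pi/3}\!\!\int_{\ell_i > 0,\;\sum\ell_i\le R}\tfrac{\sqrt 3}{2}(\ell_1+\ell_2+\ell_3)\,d\ell_1\,d\ell_2\,d\ell_3\,d\alpha \;=\; \tfrac{\sqrt 3\pi R^4}{24}.
\]
Since $\Lambda \times \Lambda$ has covolume $1$ in $\C^2 \cong \R^4$, a standard lattice-point-counting asymptotic for regions with piecewise-smooth boundary gives that the count of all (not necessarily primitive) tripods of length $\le R$ is asymptotic to this volume.

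Finally, pass to primitive tripods via M\"obius inversion: a tripod is primitive iff $(\mu_1,\mu_2)$ is not in $k\Lambda \times k\Lambda$ for any integer $k \ge 2$, equivalently iff it is not a $k$-fold scaling of another tripod with lattice endpoints. Since rescaling preserves the lattice-endpoint condition and multiplies length by $k$,
\[
N_{\downY}(R, \Lambda) \;=\; \sum_{k\ge 1}\mu(k)\,N_{\mathrm{all}}(R/k, \Lambda)\;\sim\; \frac{\sqrt 3\pi R^4}{24}\sum_{k\ge 1}\frac{\mu(k)}{k^4} \;=\;\frac{\sqrt 3\pi}{24\,\zeta(4)}R^4 \;=\; \frac{15\sqrt 3}{4\pi^3}R^4,
\]
using $\sum_k \mu(k)/k^4 = 1/\zeta(4)$ and $\zeta(4) = \pi^4/90$.

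The main obstacle I anticipate is making the change-of-variables precise: characterizing the region $\Omega$ explicitly from the positivity conditions on the $\ell_i$, verifying that the parametrization is a $3$-to-$1$ cover of $\Omega$ (rather than some other multiplicity) so that the cyclic bookkeeping is correct, and confirming that the boundary of $\Omega \cap \{\ell \le R\}$ is regular enough (Lipschitz, say) for the lattice-point-counting asymptotic to hold with error $o(R^4)$.
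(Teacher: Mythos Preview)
Your proposal is correct and follows essentially the same route as the paper: encode tripods as pairs in $\Lambda^2$ (your $(\mu_1,\mu_2)=(\vec v_1-\vec v_2,\vec v_1-\vec v_3)$ is a unimodular change of basis from the paper's $(z,w)=$ two triangle vertices), derive the length identity $\ell(\downY)=|\omega\mu_1+\mu_2|$ (the paper's $|e^{i\pi/3}z+e^{-i\pi/3}w|$), resolve the threefold ambiguity by a $2\pi/3$ angle restriction, and obtain the $1/\zeta(4)$ from primitivity.

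The one genuine difference is the volume computation. The paper changes coordinates to $(z,u)$ with $u=e^{i\pi/3}z+e^{-i\pi/3}w$ the Toricelli point, then slices by $u$ and shows each slice $\{z:(z,u)\in\phi(\Omega^\downY)\}$ is literally an equilateral triangle of side $|u|$, hence area $\tfrac{\sqrt3}{4}|u|^2$; integrating $|u|^2$ over the sector $|u|\le1$, $\arg u\in[0,2\pi/3)$ gives $\tfrac{\sqrt3\pi}{24}$. Your route computes the Jacobian of $(\ell_1,\ell_2,\ell_3,\alpha)\mapsto(\mu_1,\mu_2)$ directly as $\tfrac{\sqrt3}{2}(\ell_1+\ell_2+\ell_3)$ and integrates over the simplex. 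Both are short; the paper's slicing is more geometric (and makes the region $\Omega^\downY$ explicit, which addresses your boundary-regularity concern), while your Jacobian argument is more mechanical and avoids the Toricelli construction. The two are in fact dual descriptions: your combination $\omega\mu_1+\mu_2$ \emph{is} the paper's $u$ up to a unit, and fixing $u$ and varying $z$ over an equilateral triangle is exactly the same as fixing $\alpha$ and $\ell_1+\ell_2+\ell_3$ and varying $(\ell_1,\ell_2,\ell_3)$ over a $2$-simplex.
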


\noindent We will prove this claim by turning our problem into a problem of counting pairs $(z, w) \subset \Lambda = \Z+\Z\tau$ satisfying certain conditions. The $1/\zeta(4)$ term in our theorem arises from the fact that we will be counting pairs 
$$z = a+ b \tau,\quad w = c+d\tau ,\qquad  a, b, c, d \in \Z$$ 
with $\gcd(a, b, c, d) = 1.$ The term $\frac{\sqrt 3 \pi}{24}$ represents the volume of a region in $\C^2$ in which we will be counting dilations of sets of points.

To clarify how the concepts of primitive versus reduced tripods affect their asymptotics, we prove the following two theorems.  Let $N_{nonred, \downY} (R, \Lambda)$ be the count of the non-reduced tripods up to length $R$.

\begin{Theorem}\label{theorem:AEreduced}
For almost every lattice $\Lambda$,
$$N_{nonred, \downY} (R, \Lambda) = o(R^4).$$
\end{Theorem}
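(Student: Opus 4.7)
My plan is to establish the stronger statement that $N_{nonred, \downY}(R, \Lambda) = 0$ for every $R$ and almost every $\Lambda$. The first observation is that Definition~\ref{def:tripod} requires the Fermat point to descend to $p \ne 0$ in $\T$, so no lift of $p$ lies in $\Lambda$. Consequently, the only way a primitive tripod can fail to be reduced is to contain a lattice point in the interior of one of its three legs. If $q \in \Lambda$ lies in the interior of leg $j$ (which joins $p$ to the endpoint $z_j$ in direction $e^{i\phi_j}$, with $\phi_j = \theta + (j-1)\tfrac{2\pi}{3}$), then $q - z_j$ is a nonzero lattice vector parallel to $e^{i\phi_j}$, forcing $\phi_j = \arg(u)$ for some primitive $u \in \Lambda$.

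Next I parameterize tripods modulo translation by the pair $(z, w) := (z_2 - z_1, z_3 - z_1) \in \Lambda^2$, with $z = (\ell_2 \omega - \ell_1) e^{i\theta}$ and $w = (\ell_3 \omega^2 - \ell_1) e^{i\theta}$. Equating the two expressions for $\ell_1$ extracted from $e^{-i\theta} z$ and $e^{-i\theta} w$ yields the single real-linear consistency relation
\[
F(z, w, \theta) \;:=\; \sqrt{3}\, \mathrm{Re}\bigl(e^{-i\theta}(w - z)\bigr) \;-\; \mathrm{Im}\bigl(e^{-i\theta}(z + w)\bigr) \;=\; 0.
\]
By the cyclic symmetry of the three legs --- implemented via the $\Z$-linear bijections $(z, w) \mapsto (w - z, -z)$ and $(z, w) \mapsto (-w, z - w)$ of $\Lambda^2$ --- it suffices to bound the number of tripods with leg 1 in a lattice direction, i.e.\ $\theta = \arg(u)$ for some primitive $u = m + n\tau \in \Lambda$. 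Writing $z = a + b\tau$ and $w = c + d\tau$ with $a, b, c, d \in \Z$, and multiplying $F = 0$ through by $|u|$ so that $|u|\, e^{-i\theta} = \bar u$, the resulting equation expands as an integer-coefficient $\Z$-linear combination of the four real numbers
\[
\sqrt{3}, \qquad \sqrt{3}\,\mathrm{Re}(\tau), \qquad \sqrt{3}\,|\tau|^2, \qquad \mathrm{Im}(\tau).
\]
For $\tau$ outside a countable union of real algebraic curves in the upper half-plane --- hence for almost every $\Lambda$ --- these four quantities are $\Q$-linearly independent.

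Assuming that independence, each of the four component coefficients must vanish separately, giving the system $m(c-a) = 0$, $n(d-b) = 0$, $m(d-b) + n(c-a) = 0$, $m(b+d) = n(a+c)$. A short case analysis using $\gcd(m, n) = 1$ forces $(a, b, c, d) = k(m, n, m, n)$ for some $k \in \Z$; that is, $z = w = ku$. But $z = w$ forces $\ell_2 = \ell_3 = 0$ and $z_2 = z_3$, which is not a valid tripod. Applying the cyclic substitutions to legs 2 and 3 yields the same conclusion there, so $N_{nonred, \downY}(R, \Lambda) = 0$ for almost every $\Lambda$, which certainly implies the required $o(R^4)$ bound. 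The main technical work will be the careful expansion of $|u|\, F(z, w, \arg u) = 0$ in the basis $\{1, \tau, \bar\tau\}$ to isolate the four transcendental coefficients cleanly, and to verify the measure-zero nature of the set of exceptional $\tau$'s where the $\Q$-linear independence fails (for instance, $\tau = i$ and $\tau = e^{2\pi i/3}$ both lie in this exceptional set, explaining why the result needs an ``almost every'' caveat).
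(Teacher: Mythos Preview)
Your proof is correct and follows the same overall strategy as the paper's: both establish the stronger statement that for almost every $\Lambda$ there are \emph{no} non-reduced tripods at all, by observing that a lattice point in the interior of a leg forces that leg to point in a lattice direction, and then showing that the tripod angle conditions impose an algebraic constraint on the lattice shape parameter $\tau$ that holds only on a countable union of measure-zero curves.

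The setups differ in a way worth noting. The paper rotates and rescales the lattice so that the offending leg lies on the positive real axis; in those coordinates the tripod point is a real number $r_p$, the two angle conditions at $p$ become two linear equations, and adding them eliminates $r_p$ to give a single rational linear relation between $s = \mathrm{Re}(\tau)$ and $t' = \mathrm{Im}(\tau)/\sqrt 3$. You instead keep the lattice fixed as $\Z + \Z\tau$, encode the leg direction by a primitive vector $u = m + n\tau$, and derive a single consistency relation $F(z,w,\arg u)=0$ which, after clearing $|u|$, becomes an integer linear relation among the four real numbers $\sqrt 3,\ \sqrt 3\,\mathrm{Re}(\tau),\ \sqrt 3\,|\tau|^2,\ \mathrm{Im}(\tau)$; generic $\Q$-linear independence of these then forces the degenerate solution $z=w$. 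Your version makes the transcendence step more explicit and avoids changing the lattice, at the price of carrying the extra parameters $m,n$ and a four-term independence statement; the paper's normalization is more economical (two unknowns, one relation) but leaves implicit the passage from a constraint on the rotated/scaled lattice back to a measure-zero condition on the original space of lattices. Both arguments are short and complete; neither has a real advantage.
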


On the other hand, we show that Theorem~\ref{theorem:AEreduced} does not hold for all lattices.

\begin{Theorem}\label{theorem:MaximalReduced}
Let $\zeta = e^{2\pi i/6}$.  Then 
$$N_{nonred, \downY} (R, \ZZ + \ZZ\zeta) \geq C R^4$$
for some positive constant $C$, for sufficiently large $R$.
\end{Theorem}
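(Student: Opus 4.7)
The plan is to exploit the six-fold symmetry of the hexagonal lattice $\Lambda := \ZZ+\ZZ\zeta$ to show that, outside of a bounded family of short tripods, \emph{every} primitive tripod is automatically non-reduced. Combining this with Theorem~\ref{theorem:main} (applied after rescaling $\Lambda$ to unit covolume, yielding $N_{\downY}(R,\Lambda)\sim cR^4$ for some $c>0$) then gives the desired lower bound on $N_{nonred,\downY}(R,\Lambda)$.

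\textbf{Key geometric input.} Let $\omega = e^{2\pi i/3}$, so that $\omega=\zeta^2$ and $\omega^2=-\zeta$ both lie in $\Lambda$. Since multiplication by $\zeta^{-1}$ preserves $\Lambda$, one checks directly that each of the three leg-direction lines meets $\Lambda$ in a discrete subgroup of spacing exactly~$1$:
$$\Lambda \cap \RR\omega^{k} \;=\; \ZZ\cdot\omega^{k}, \qquad k=0,1,2.$$

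\textbf{Forcing non-reducedness.} Let $\downY$ be a primitive tripod with tripod point $p$, leg lengths $(\ell_1,\ell_2,\ell_3)$, and suppose $\ell_i>1$ for some $i$. Lift the $i$th tip to a point $z_i\in\Lambda$ and take $\tilde p := z_i - \ell_i\omega^{i-1}$ as the corresponding lift of $p$. For $t\in(0,\ell_i)$, the parameter $t$ on leg $i$ maps to $0\in\T$ iff $\tilde p + t\omega^{i-1}\in\Lambda$, equivalently $(\ell_i-t)\omega^{i-1}\in\Lambda\cap\RR\omega^{i-1} = \ZZ\omega^{i-1}$, i.e.\ iff $\ell_i-t\in\ZZ$. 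The choice $t=\ell_i-1\in(0,\ell_i)$ exhibits such a point, so the $i$th leg passes through a non-endpoint lattice point and $\downY$ is non-reduced.

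\textbf{Conclusion and obstacle.} Contrapositively, every reduced primitive tripod must satisfy $\ell_1,\ell_2,\ell_3\le 1$, hence $\ell(\downY)\le 3$, and so $N_{red,\downY}(R,\Lambda)\le N_{\downY}(3,\Lambda)<\infty$ for every $R\ge 3$. Subtracting,
$$N_{nonred,\downY}(R,\Lambda) \;=\; N_{\downY}(R,\Lambda) - N_{red,\downY}(R,\Lambda) \;\ge\; c R^4 - N_{\downY}(3,\Lambda),$$
which exceeds $(c/2)R^4$ for all sufficiently large $R$, yielding the claim with $C=c/2$. The only non-routine step is the identification $\Lambda\cap\RR\omega^k = \ZZ\omega^k$; this identity is precisely what fails for a generic lattice (where typically $\RR\omega^k\cap\Lambda = \{0\}$), which is the structural reason this phenomenon does not arise in Theorem~\ref{theorem:AEreduced}.
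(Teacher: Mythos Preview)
Your argument contains a genuine gap: you assume that the legs of a tripod on $\T$ point in the fixed directions $1,\omega,\omega^2$, so that the lift of the tripod point can be written as $\tilde p = z_i - \ell_i\omega^{i-1}$. But the isometric immersion $\cI:\mathcal G\to\T$ in Definition~\ref{def:tripod} is allowed to rotate: what is preserved is the $2\pi/3$ angle between legs, not their absolute directions. The paper's parametrization in \S\ref{sec:lift} makes this explicit --- tripods are indexed by arbitrary pairs $(z,w)\in\Lambda^2$ satisfying the angle condition, and the tripod in Figure~\ref{fig:tripod1i} already has legs not aligned with $1,\omega,\omega^2$. Consequently your identity $\Lambda\cap\RR\omega^k=\ZZ\omega^k$, while true, is irrelevant: the leg through $z_i$ lies on the line $z_i+\RR e^{i\theta}$ for some $\theta$ that depends on the tripod, and for generic $\theta$ this line meets $\Lambda$ only at $z_i$.

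Your conclusion that every reduced tripod has $\ell(\downY)\le 3$ is in fact false. For each integer $N\ge 2$, take $z=N$ and $w=\zeta$. One checks that the triangle $\Delta(0,N,\zeta)$ has all angles $\le 2\pi/3$, and that the three Toricelli points $F=1+N\zeta$, $E-z=-(N{+}1)+\zeta$, $D-w=N-(N{+}1)\zeta$ are all primitive in $\Lambda$. Since each leg of the tripod is a subsegment of one of the segments $0F$, $zE$, $wD$ (Theorem~\ref{thm:tripod-triangle}), none of the legs contains an interior lattice point, and the tripod point (lying strictly between $0$ and the primitive vector $F$) is not a lattice point either. Thus this tripod is reduced and primitive, yet $\ell(\downY)=|1+N\zeta|=\sqrt{N^2+N+1}\to\infty$. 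So $N_{red,\downY}(R,\Lambda)$ is unbounded and your subtraction argument collapses.

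The paper's proof proceeds quite differently: it uses the special feature of the hexagonal lattice that the Toricelli point $u=\zeta z+\zeta^{-1}w$ always lies in $\Lambda$, and then directly \emph{counts} those tripods for which $u$ is non-primitive and the leg $0p$ is long enough (namely $|0p|>\tfrac12|0u|$) to reach the first interior lattice point on $0u$. This yields a positive-density subset of all tripods, giving the $R^4$ lower bound without needing the (false) claim that almost all tripods are non-reduced.
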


\subsection{Lifting}\label{sec:lifting} Note that if we lift a (primitive) tripod $\downY$ from $\T$ to $\C$, we obtain a center point $\tilde{p}$ and segments emanating from $\tilde{p}$ to points in $\Lambda$. We can always choose our lift so that one of these points is $0$, and we call the other two $z$, $w$ with, say $\arg(z) < \arg(w)$. 
To remove ambiguity, we insist that 
the point $\tilde p$ lies in the sector of $\C$ specified by
$$  0 \leq \arg(\tilde p) < 2\pi /3.$$
Only certain pairs $(z, w)$ will yield triangles which have inscribed tripods. The length of the tripod can be computed explicitly in terms of $z$ and $w$, thus turning our problem into a lattice point counting problem in $\bC^2$.

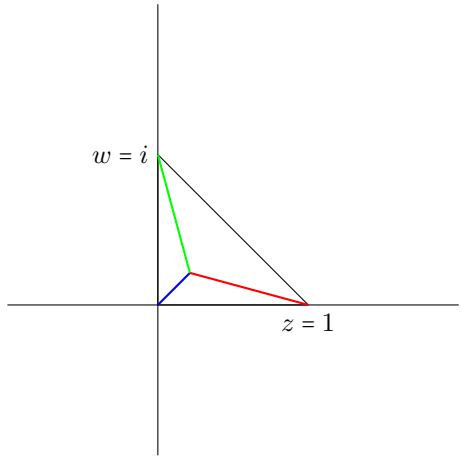
\begin{figure}
\begin{tikzpicture}[scale = 2.0]

\draw(-1,0)--(2,0);
\draw(0, -1)--(0,2);
\draw(0,0)--(0,1)node[left]{$w=i$};

\draw(0,0)--(1,0)node[below]{$z=1$};

\draw(1,0)--(0,1);

\draw[thick, blue](0,0)--(0.21325, 0.21325);
\draw[thick, green](0,1)--(0.21325, 0.21325);
\draw[thick, red](1,0)--(0.21325, 0.21325);
\end{tikzpicture}

\caption{Tripod $\downY$ inscribed in triangle $\Delta(0, 1, i)$. The tripod point is $\left( \frac{1}{3+\sqrt 3}, \frac{1}{3 + \sqrt 3}\right)$.}
\label{fig:tripod1i}
\end{figure}


%
\subsection{Differentials, saddle connections, and tripods}\label{sec:differentials} Our problem is also inspired by the problem of counting \emph{saddle connections} for \emph{quadratic differentials}. Given a holomorphic quadratic differential $q$ on a compact Riemann surface $X$, there is a singular flat metric associated to $q$ with conical singularities of angle $(k+2)\pi$ at zeros of order $k$ of the differential. A \emph{saddle connection} is a geodesic trajectory connecting two singular points with no singularities in its interior. Alternatively, one can think of it as a regular point $p$ on $(X, q)$ with two geodesic rays $\gamma_1$, $\gamma_2$ emanating from $p$ with an angle of $\pi$, each terminating in a singular point. Note that angles of $\pi$ correspond to different local choices of a square root of $q$. Of course there are (infinitely) many choices of $p$ in this setting. The problem of counting saddle connections is very well studied, see~\cite{EskinSurvey}, for example. Our problem can be thought of as a problem naturally adapted to the setting of holomorphic \emph{cubic} differentials $c$, where it is natural to consider trajectories emanating from a point at angles of $2\pi/3$, each corresponding to a different choice of cube root of the differential $c$. More generally, finite BPS webs arise in the work of \cite{GMN} and are associated to lists of differentials of varying orders.  This work is motivated by the general problem of counting finite BPS webs of bounded length.

%

\begin{rem}
We remark that recent work of Koziarz-Nguyen~\cite{KoziarzNguyen} shows that the leading term for the normalized asympotics for counting certain types of triangulations on surfaces is in $\Q \cdot (\sqrt 3\pi)^{N}$ for an appropriate power $N$.  Nevertheless, we do not see an obvious relation between the results.  There is no primitivity assumption in the work of \cite{KoziarzNguyen}, which accounts for the $\zeta(4)$ factor in our work.  However, if the primitivity is removed, then the $R^4$ growth rate of $N_{\downY} (R, \Lambda)$ behaves as $\sqrt{3}\pi/24 \not \in \Q \cdot (\sqrt{3}\pi)^4$.  So there does appear to be a fundamental difference in the objects being counted.
\end{rem}

\subsection*{Acknowledgments} We would like to thank Vincent Delecroix and Andy Neitzke for useful discussions; morally, they are coauthors of this paper.  We thank Marty Lewinter for showing us the connection between tripods and Fermat points.  We would like to thank the Mathematical Sciences Research Institute (MSRI), where this project started in Fall 2019 at the semester program on \textit{Holomorphic Differentials in Mathematics and Physics}.

\subsection*{Organization} 
In \S\ref{sec:fermat}, we explain the Euclidean geometry which allows us to translate the problem to a lattice point counting problem. In \S\ref{sec:triprop}, we summarize some nice properties of tripods, lengths, intersections, and covers. In \S\ref{sec:lattice}, we state precisely the lattice point counting problem in $\C^2$ and the lattice point counting results which we use to solve the problem. In \S\ref{sec:volumes}, we compute the volume of a region in $\C^2$ which gives us the main term in the asymptotic formula. In \S\ref{sec:NonRedTripods}, we prove Theorems~\ref{theorem:AEreduced} and~\ref{theorem:MaximalReduced} concerning non-reduced tripods.

\section{Fermat points and Steiner trees}
\label{sec:fermat} 
\subsection{Inscribed Tripods}
\label{sec:inscribed} 
We recall some beautiful facts from classical Euclidean geometry which are crucial for our translation of our counting problem to a lattice point counting problem. The following is due to Toricelli. The problem was posed to him by Fermat, and published by Toricelli's student Viviani. An excellent history of this problem (and the more general Steiner tree problem) can be found here~\cite{Brazil}.

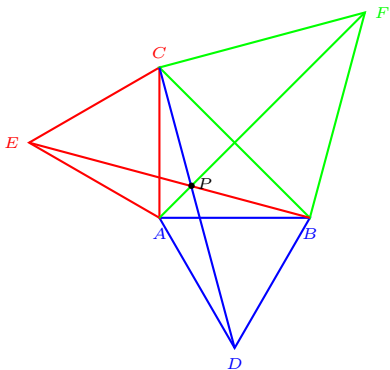
\begin{figure}[h]
\begin{tikzpicture}[scale=2.0]

\draw[thick, blue](0,0)node[below]{\tiny $A$}--(1,0)node[below]{\tiny $B$}--(300:1)node[below]{\tiny $D$}--cycle;
\draw[thick, red](0,0)--(0,1)node[above]{\tiny $C$}--(150:1)node[left]{\tiny $E$}--cycle;
\draw[thick, green](1,0)--(0,1)--(1.366, 1.366)node[right]{\tiny $F$}--cycle;
\draw[thick, blue](300:1)--(0,1);
\draw[thick, green](0,0)--(1.366, 1.366);
\draw[thick, red](150:1)--(1,0);

\filldraw (0.21325, 0.21325)circle[radius=0.5pt];

\node[right] at (0.19, 0.23){\tiny $P$};

%
%
%
%
%
%
%
\end{tikzpicture}
\caption{Constructing a tripod (aka Steiner tree) $\downY$ inscribed in a triangle $ \Delta(ABC) $.}
\label{fig:tripodconstruction}
\end{figure}

\begin{Theorem}
\label{thm:tripod-triangle} 
A triangle $\Delta(ABC)$ contains an inscribed tripod if and only if the largest angle is at most $2\pi/3$. In this case, the tripod $\downY(ABC)$  is constructed by constructing equilateral triangles (which do not intersect the original triangle) on each side, and drawing lines connecting the opposite vertex of this equilateral triangle to the opposite vertex of the original triangle.  
That is, 
\begin{itemize}
\item 
build an equilateral triangle $\Delta(ABD)$ on the side $AB$, 
\item
an equilateral triangle $\Delta(ACE)$ on the side $AC$, 
\item
an equilateral triangle $\Delta(BCF)$ on the side $BC$, 
\item 
and build the segments $DC$,  $BE$, and $AF$. 
\end{itemize}
The three line segments ($DC, BE, AF)$  intersect at a common point, $P$, and the line segments $AP$, $BP$, $CP$, form the tripod. 
See Figure~\ref{fig:tripodconstruction}. 
Furthermore, this point $P$ is the unique point which minimizes the quantity $$|AQ| + |BQ| + |CQ|$$ over all points $Q$ in the plane. The tripod is known as the \emph{Steiner tree} associated to the points $A, B, C$~\cite{Steinertreebook}. Moreover, the length of the tripod is given by 
\begin{align*}
\ell(\downY(ABC)) &= |AP| + |BP| + |CP| \\
&= |DC| = |BE| = |AF|.
\end{align*}
\end{Theorem}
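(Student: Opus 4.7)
The plan is to reduce the sum-of-distances problem to a single path-length problem via a classical rotation trick. Fix any candidate point $Q$ in the plane. I would rotate the plane by $+\pi/3$ about the vertex $B$ and call this rotation $\rho$. Because the equilateral triangle $\Delta(BCF)$ is erected on the side of $BC$ opposite $A$, the rotation sends $C$ to $F$. Writing $Q' = \rho(Q)$, the triangle $\Delta(BQQ')$ is equilateral (two equal sides with included angle $\pi/3$), so $|BQ| = |QQ'|$; also $|CQ| = |FQ'|$ since $\rho$ is an isometry. Hence
\[
|AQ| + |BQ| + |CQ| \;=\; |AQ| + |QQ'| + |Q'F|,
\]
which is the length of a polygonal path from $A$ to $F$. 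This length is at least $|AF|$, with equality exactly when $A, Q, Q', F$ are collinear and appear in that order on the segment $\overline{AF}$.

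For the equality case, collinearity of $A, Q, Q'$ with $Q'$ beyond $Q$ forces $\angle AQB = \pi - \angle BQQ' = 2\pi/3$, and collinearity of $Q, Q', F$ forces $\angle BQ'F = 2\pi/3$, which pulls back under $\rho^{-1}$ to $\angle BQC = 2\pi/3$. So at the minimizer $Q = P$ the three angles around $P$ are each $2\pi/3$, which is precisely the tripod condition. Repeating the rotation about $A$ (resp.\ $C$) shows that $P$ also lies on the segment $\overline{BE}$ (resp.\ $\overline{DC}$), giving concurrency of the three cevians together with the equalities
\[
|AF| = |BE| = |DC| = |PA| + |PB| + |PC| = \ell(\downY(ABC)).
\]

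For the existence criterion, I would split on the largest angle of $\Delta(ABC)$. If every angle is at most $2\pi/3$, then the segment $\overline{AF}$ crosses the interior of $\Delta(ABC)$ and the collinearity condition is realized by an interior point $P$, producing a genuinely inscribed tripod. If some vertex angle, say at $A$, strictly exceeds $2\pi/3$, then the segment $\overline{AF}$ leaves the admissible angular sector at $A$; no interior point $Q$ can attain the lower bound $|AF|$ while simultaneously satisfying all three $2\pi/3$ angle conditions, and the constrained minimum of $|AQ|+|BQ|+|CQ|$ is in fact achieved at $Q=A$, so no tripod can be inscribed.

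The main obstacle is the orientation bookkeeping in the rotation step: one has to verify that the choice of sign for $\rho$ really does map $C$ to the vertex $F$ constructed on the far side of $BC$ (rather than to the mirror vertex on the near side), and that in the equality case the points $Q$ and $Q'$ lie between $A$ and $F$ in the correct order for the polygonal path bound to be sharp. Once this is set up carefully for one rotation, applying the symmetric argument at the other two vertices delivers concurrency, the common length $|AF|=|BE|=|DC|$, uniqueness of the Fermat--Torricelli minimizer, and the angle criterion uniformly.
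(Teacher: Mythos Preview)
The paper does not actually prove this theorem; it is stated as a classical result attributed to Torricelli, with a pointer to \cite{Brazil} for the history and \cite{Steinertreebook} for the Steiner tree terminology. So there is no ``paper's own proof'' to compare against.

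Your argument is the standard Hofmann rotation proof and is correct in outline. The identity $|AQ|+|BQ|+|CQ|=|AQ|+|QQ'|+|Q'F|\ge |AF|$ via the $\pi/3$ rotation about $B$ is exactly the classical trick, and your reading off of the $2\pi/3$ angles from the collinearity in the equality case is right. Repeating at the other two vertices gives concurrency and the common length $|AF|=|BE|=|DC|$, as you say. The orientation caveat you flag is genuine but routine: one fixes the sign of the rotation so that $C\mapsto F$ lands on the externally erected vertex, and then the order $A,Q,Q',F$ along $\overline{AF}$ follows once $Q$ is interior to the triangle. The only place your sketch is a bit thin is the existence/nonexistence dichotomy at the threshold: when the largest angle equals $2\pi/3$ the tripod point coincides with that vertex (a degenerate tripod with one leg of length zero), and when it strictly exceeds $2\pi/3$ one checks that no interior point can see all three vertices at angle $2\pi/3$ (the three arcs on which $\angle AQB=\angle BQC=\angle CQA=2\pi/3$ fail to meet inside), whence the unconstrained minimum of $|AQ|+|BQ|+|CQ|$ is at the obtuse vertex. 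These are standard refinements and do not affect the validity of your approach.
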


\noindent Applying the above result to the triangle $\Delta(0, z, w)$ for $$z, w \in \C \mbox{ such that }\arg(z) < \arg(w) 
\mbox{ and }\Delta(0, z, w) \mbox{ has all angles at most }2\pi/3, $$ 
we obtain the following result.

\begin{lemma}
\label{lemma:length} 
Let $\downY(z, w)$ denote the tripod inscribed in the triangle $\Delta(0, z, w)$, and let $p$ denote the tripod point of $\downY(z,w)$. 
Then 
\begin{align}\label{eq:tripodlength}
\ell(\downY(z, w)) &= |e^{i\pi/3}z + e^{-i\pi/3}w|
\end{align}
and
\begin{equation}
\label{eq:tripod-arg}
\arg(p) = \arg(e^{i\pi/3}z + e^{-i\pi/3}w) .
\end{equation}
\end{lemma}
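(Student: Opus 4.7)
The plan is to identify the formula $e^{i\pi/3}z + e^{-i\pi/3}w$ directly as a specific point in Theorem~\ref{thm:tripod-triangle}, namely the third vertex of one of the external equilateral triangles. Set $A = 0$, $B = z$, $C = w$, and let $F$ denote the third vertex of the equilateral triangle built externally on side $BC$. By Theorem~\ref{thm:tripod-triangle}, the tripod length equals $|AF|$ and the segment $AF$ passes through the tripod point $P$. Since $A = 0$, both conclusions of the lemma reduce to showing
\[
F = e^{i\pi/3} z + e^{-i\pi/3} w.
\]

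For the main computation, I would write $F$ as the rotation of $B$ about $C$ by angle $\pi/3$ (in the sense that places it externally to $\Delta(0,z,w)$ under the assumption $\arg(z) < \arg(w)$):
\[
F = C + e^{i\pi/3}(B - C) = e^{i\pi/3}\,z + \bigl(1 - e^{i\pi/3}\bigr)\, w.
\]
Then the elementary identity $1 - e^{i\pi/3} = e^{-i\pi/3}$ yields the claimed formula. The length claim \eqref{eq:tripodlength} follows immediately since $\ell(\downY(z,w)) = |AF| = |F - 0| = |F|$. For the argument claim \eqref{eq:tripod-arg}, the Fermat point $P$ lies in the interior of the triangle $\Delta(0,z,w)$ while $F$ lies on the opposite side of line $BC$ from $A = 0$, so $P$ is strictly between $0$ and $F$ on the segment $AF$; hence $\arg(P) = \arg(F)$.

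The only genuinely nontrivial step is the orientation convention: one must verify that the rotation $B \mapsto C + e^{i\pi/3}(B-C)$ truly gives the \emph{external} vertex rather than the internal one, and this is precisely where the hypothesis $\arg(z) < \arg(w)$ enters. I would justify this either by checking a reference configuration (e.g.\ $z = 1$, $w = i$, where the formula yields $F = \tfrac{1+\sqrt 3}{2}(1+i)$, outside $\Delta(0,1,i)$) or by a direct calculation showing that $0$ and $F$ lie on opposite sides of the line through $z$ and $w$ under this ordering of arguments. Once the orientation is pinned down, the rest is routine complex arithmetic.
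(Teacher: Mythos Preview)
Your proof is correct and takes a genuinely different route from the paper's. The paper does not invoke Theorem~\ref{thm:tripod-triangle} at all in this proof; instead it works directly from the angular structure at the tripod point. Since the three legs $p\to 0$, $p\to z$, $p\to w$ meet at angles $2\pi/3$, the complex numbers $p$, $e^{i\pi/3}(z-p)$, $e^{-i\pi/3}(w-p)$ are positive real multiples of one another, so the magnitude of their sum equals the sum of their magnitudes. The identity $e^{i\pi/3}+e^{-i\pi/3}=1$ then collapses that sum to $e^{i\pi/3}z + e^{-i\pi/3}w$, giving both the length and the argument at once.

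Your approach is more geometric: you recognize $e^{i\pi/3}z + e^{-i\pi/3}w$ as the external vertex $F$ in the Toricelli construction and read off the conclusions from Theorem~\ref{thm:tripod-triangle}. This is arguably cleaner, since it explains \emph{what} the expression is rather than just verifying an identity, and indeed the paper later (in \S\ref{sec:volumes}) names this very point the ``Toricelli point'' $u$. The cost is that you must pin down the orientation, which you handle honestly. The paper's approach avoids that bookkeeping but leaves the geometric meaning of the formula implicit until later.
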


\begin{proof} We write $\downY$ for $\downY(z, w)$. Let $p$ denote the tripod point. Then 
$$\ell(\downY) = |p| + |w-p| + |z-p|.$$ 
On the other hand, we assume that $\arg(z) < \arg(w)$ and by the fact that the line segments $p0$, $pz$, and $pw$ are at angles of $2\pi/3$, we have that the three complex numbers 
$$e^{i \pi/3} (z-p), \quad  p,\quad  e^{-i\pi /3}(w-p)$$ 
are parallel (that is, their ratios are real and positive). Therefore, the magnitude of their sum is the sum of their magnitudes. 
Since 
$$ 
1 = e^{i\pi/3} + e^{-i\pi/3},$$
it follows that
\begin{align}\label{eq:tripodlengthcomplex}
\ell(\downY) &= |p| + |w-p| + |z-p|\\ \nonumber
&= |p| + |e^{-i \pi/3}(w-p)| + |e^{i \pi/3}(z-p)|\\ \nonumber
&= |p + e^{-i \pi/3}(w-p) + e^{i\pi/3}(z-p)|\\ \nonumber 
&= |e^{-i \pi/3}w + e^{i \pi/3} z| . \nonumber
\end{align}
Our earlier observation that
the complex numbers $p, e^{-i \pi/3}(w-p), e^{i \pi/3}(z-p)$ all have the same argument implies that
\begin{align*}
\arg(p) &= \arg(p + e^{-i \pi/3}(w-p) + e^{i \pi/3}(z-p)) \\
&= \arg(e^{-i \pi/3}w + e^{i \pi/3}z)
\end{align*}
as claimed.
\end{proof}

\begin{rem}
There are also other ways of defining lengths of tripods. For example, given a tripod $\downY$ inscribed in the triangle $\Delta(ABC),$ we could define its \emph{triangle length} $\ell_{\Delta}$ to be $$\ell_{\Delta} (\downY) = |AB| + |AC| + |BC|.$$ 
\end{rem}

\subsection{Steiner trees}\label{sec:steiner} More generally, it could be interesting to consider projections to the torus of solutions to the \emph{Euclidean Steiner tree problem} with integer vertices: given $N$ points in the plane, find the connected embedded graph with minimal total length with vertices at these points. For two points this is of course the straight line, and more generally, it is not hard to see that the minimizer must be a tree. 

\section{Tripod Properties}\label{sec:triprop}

In this section, we consider the number of self-intersections of a tripod on a torus 
and the number of subregions 
that a tripod divides a torus into. We relate these tripod properties to its lengths, defined in the previous section.


\subsection{Lattice index and tripod lengths}
Given a tripod $\downY$ in a lattice $\Lambda$,
we define the {\em lattice index} of $\downY$ in $\Lambda$
as the index $[\Lambda : \Lambda(\downY)]$,
where $\Lambda(\downY)$ is the minimal lattice in $\R^2$
which contains $\downY$ as a tripod.
We define the {\em volume} of a tripod
as the covolume of the lattice $\Lambda(\downY)$. Recall that the tripod length is given by $\ell(\downY) = \ell_1 + \ell_2 + \ell_3$. We define the $L_2$ length of a tripod by
$$L_{2}(\downY) = (\ell_1^2 + \ell_2^2 + \ell_3^2)^{1/2}.$$


\begin{Prop}
\hfill 
\begin{enumerate}
\item 
The tripod volume $vol(\downY)$ is related to the tripod lengths by
\begin{equation*}
\vol(\downY) = \frac{\sqrt 3}{4} \left( \ell(\downY)^2 - L_2(\downY)^2 \right) .
\end{equation*}

\item 
The lattice index $n(\downY,\Lambda)$ is related to the tripod lengths
$\ell(\downY)$ and $L_2(\downY)$ by
\begin{equation*}
n(\downY, \Lambda) = \covol(\Lambda(\downY))
= \frac{\sqrt 3}{4} \left( \ell(\downY)^2 - L_2(\downY)^2 \right) .
\end{equation*}
\end{enumerate}
\end{Prop}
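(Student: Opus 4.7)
The plan is to lift the tripod to the universal cover $\C$ and identify the minimal lattice $\Lambda(\downY)$ explicitly. Following Section~\ref{sec:lifting}, choose a lift in which the three endpoints of $\downY$ are $0$, $z$, and $w$. Any lattice in which $\downY$ is a tripod must contain these three endpoints; conversely, the lattice $\mathbb{Z}z + \mathbb{Z}w$ does so. Hence $\Lambda(\downY) = \mathbb{Z}z + \mathbb{Z}w$, and therefore
\begin{equation*}
\vol(\downY) = \covol(\Lambda(\downY)) = |\mathrm{Im}(\bar z w)| = 2 \cdot \mathrm{Area}(\Delta(0, z, w)).
\end{equation*}

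Next I would compute $\mathrm{Area}(\Delta(0, z, w))$ in terms of the leg lengths. The tripod point $p$ subdivides the triangle $\Delta(0, z, w)$ into three sub-triangles, namely $\Delta(0, p, z)$, $\Delta(z, p, w)$, and $\Delta(w, p, 0)$. In each sub-triangle, the two sides meeting at $p$ are legs of the tripod of lengths $\ell_i, \ell_j$ (for some pair), and by the definition of a tripod they meet at angle $2\pi/3$. Hence each sub-triangle has area $\tfrac{1}{2}\ell_i \ell_j \sin(2\pi/3) = \tfrac{\sqrt 3}{4} \ell_i \ell_j$. Summing over the three pairs gives
\begin{equation*}
\mathrm{Area}(\Delta(0, z, w)) = \frac{\sqrt 3}{4}\bigl(\ell_1 \ell_2 + \ell_2 \ell_3 + \ell_1 \ell_3\bigr).
\end{equation*}

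Combining with the elementary algebraic identity
\begin{equation*}
\ell(\downY)^2 - L_2(\downY)^2 = (\ell_1+\ell_2+\ell_3)^2 - (\ell_1^2+\ell_2^2+\ell_3^2) = 2(\ell_1 \ell_2 + \ell_2 \ell_3 + \ell_1 \ell_3)
\end{equation*}
yields part (1):
\begin{equation*}
\vol(\downY) = 2 \cdot \mathrm{Area}(\Delta(0,z,w)) = \frac{\sqrt 3}{4}\bigl(\ell(\downY)^2 - L_2(\downY)^2\bigr).
\end{equation*}

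For part (2), the lattice index satisfies the standard relation $\covol(\Lambda(\downY)) = [\Lambda:\Lambda(\downY)] \cdot \covol(\Lambda)$. Since $\Lambda$ has unit covolume by the standing assumption of the paper, $n(\downY, \Lambda) = [\Lambda:\Lambda(\downY)] = \covol(\Lambda(\downY))$, and part (2) follows immediately from part (1). The only real subtlety is the explicit identification $\Lambda(\downY) = \mathbb{Z}z + \mathbb{Z}w$; the rest is a direct decomposition-of-triangle computation together with the identity $(a+b+c)^2 - (a^2+b^2+c^2) = 2(ab+bc+ca)$.
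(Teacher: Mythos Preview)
Your proposal is correct and follows essentially the same approach as the paper: identify $\Lambda(\downY)$ with the lattice spanned by $z$ and $w$, compute the area of $\Delta(0,z,w)$ by decomposing it into three subtriangles at the tripod point each with apex angle $2\pi/3$, and then apply the identity $(\ell_1+\ell_2+\ell_3)^2 - (\ell_1^2+\ell_2^2+\ell_3^2) = 2(\ell_1\ell_2+\ell_1\ell_3+\ell_2\ell_3)$. If anything, you are slightly more explicit than the paper in justifying $\Lambda(\downY)=\Z z+\Z w$ and in invoking the unit-covolume assumption for part~(2).
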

\begin{proof}
Suppose $\downY$ spans the triangle with vertices $0, z, w$. 
Then the area of a fundamental domain of $\Lambda(\downY)$
is twice the area of the triangle $\Delta(0,z,w)$.
The tripod dissects the triangle $\Delta(0,z,w)$ into three subtriangles,
each with internal angle $2\pi /3$.
By adding up these areas, we have
$$
Area(\Delta(0,z,w)) = \frac{\sqrt{3}}{4} \left( \ell_1\ell_2 + \ell_1\ell_3 + \ell_2 \ell_3 \right).
$$
Therefore,
\begin{align*}
covol(\Lambda(\downY)) 
&= \frac{\sqrt{3}}{2} \left( \ell_1\ell_2 + \ell_1\ell_3 + \ell_2 \ell_3 \right) 
= \frac{\sqrt 3}{4}\left( (\ell_1+\ell_2+\ell_3)^2 - (\ell_1^2 + \ell_2^2 + \ell_3^2) \right) .
\end{align*}
This proves the first part of the proposition. The second part
follows from the first part by the relation
$$n(\downY, \Lambda) = \covol(\Lambda(\downY)).$$
\end{proof}

\subsection{Counting intersections and regions}

\begin{Prop}\label{prop:intersections}
Suppose $\downY$ is a tripod on the torus $\T = \C / \Lambda$
with only transverse self-intersections, with index $n$.
Then the number of self-intersections of $\downY$ on $\T$ is $n-1$.
\end{Prop}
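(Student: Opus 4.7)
The plan is to combine an Euler-characteristic computation on $\T$ with covering-space information. First, viewing $\downY$ as a $1$-complex on $\T$, it has $V = 2+I$ vertices (the two true images $[0]$, $[p]$ together with the $I$ transverse self-intersections) and, by the handshake lemma, $E = (3\cdot 2 + 4I)/2 = 3 + 2I$ edges. Assuming every connected component of $\T \setminus \downY$ is a $2$-disk, the relation $V - E + F = \chi(\T) = 0$ gives $F = 1 + I$, so the proposition is equivalent to showing that $\downY$ subdivides $\T$ into exactly $n$ regions.

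To verify $F = n$, I would pass to the $n$-fold regular cover $\pi\colon \T' = \C/\Lambda(\downY) \to \T$, on which $\downY$ lifts to an embedded $\Theta$-graph $\downY'$ whose complement is a single hexagonal $2$-disk of area $n$ (as described following Definition~\ref{def:tripod}). Let $H = \Lambda/\Lambda(\downY)$ be the deck group (of order $n$), and set $I(h) := \#(\downY' \cap (h+\downY'))$ for $h \in H \setminus \{0\}$. Each self-intersection of $\downY$ on $\T$ corresponds to an ordered pair $(x, x') \in \downY' \times \downY'$ with $x' - x \in H \setminus \{0\}$, which yields the identity $\sum_{h \neq 0} I(h) = 2I$. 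The key step is therefore to prove the geometric equality
\[
\sum_{h \in H \setminus \{0\}} I(h) \;=\; 2(n-1),
\]
which combined with the above gives $I = n-1$.

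The main obstacle is establishing this sum. The most direct route is to show $I(h) = 2$ for each nontrivial $h$, i.e.\ that the embedded Steiner trees inscribed in the translated triangles $\Delta(0, z, w)$ and $h + \Delta(0, z, w)$ on $\T'$ meet in exactly two transverse points. This requires the Euclidean (Fermat-point) structure: one analyzes how the two translated triangles overlap on $\T'$ and argues by cases that their Steiner legs are forced to cross exactly twice, with transversality ruling out tangencies and triple crossings. Equivalently, one can show directly that the $n$ translates $\{h+\downY'\}_{h \in H}$ subdivide $\T'$ into $n^2$ pieces, so that $F(\pi^{-1}\downY, \T') = n^2 = n\cdot F$. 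Either way, Euler characteristic alone only returns the tautology $F = 1 + I$, so the real content lies in the Steiner-tree geometry and the careful combinatorics of how two translated Fermat configurations on the torus can possibly overlap.
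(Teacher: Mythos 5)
Your reduction is sound as far as it goes, and it arrives at exactly the same key claim as the paper: that for each nontrivial coset $h$, the two translates $\downY'$ and $h+\downY'$ on the covering torus $\C/\Lambda(\downY)$ meet in exactly two transverse points. But you stop there, explicitly deferring this to ``a case analysis of how two translated Fermat configurations can overlap.'' That is the entire content of the proposition --- everything before it (the handshake count, the identity $\sum_{h\neq 0} I(h) = 2I$) is bookkeeping --- so the proposal has a genuine gap. The paper closes it not by case analysis but by a deformation argument: the transverse intersection count $\#\bigl(\downY' \cap (\downY' + v)\bigr)$ is locally constant in the translation vector $v$ so long as all intersections stay transverse, so one slides $h+\downY'$ along such a path until it becomes a small generic perturbation $\downY' + \epsilon$ of $\downY'$ itself, where the count is visibly $2$ (each pair of parallel legs offset by $\epsilon$ contributes nothing, and the two crossings come from non-parallel legs meeting near the two trivalent vertices). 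If you want to complete your write-up you need either that deformation argument (including a word on why such a transversality-preserving path exists) or an honest direct count; as written, the crossing number ``exactly two'' is asserted, not proved.

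A secondary remark: your opening Euler-characteristic paragraph is a detour. It shows the proposition is \emph{equivalent} to the statement that $\T\setminus\downY$ has $n$ components (which is the paper's Proposition~\ref{prop:regions}, proved there \emph{from} Proposition~\ref{prop:intersections}, in the same way you derive it), but you then abandon that formulation and return to counting intersections in the cover, which yields $I = n-1$ directly via $\sum_{h\neq 0} I(h) = 2(n-1)$ without any appeal to $V - E + F = 0$. You also assume without justification that every complementary component is a disk; the paper notes this follows because the lifts of the components to $\C$ are bounded. Neither point is fatal, but the first paragraph can simply be deleted once the key intersection count is established.
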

\begin{proof}
Suppose the tripod $\downY$ spans the triangle with vertices $0, z, w$.
Let $\Lambda(\downY)$ 
denote the lattice in $\C$ spanned by $z$ and $w$;
we have $\Lambda(\downY) \subset \Lambda$ by assumption that $\downY$
is a tripod in $\Lambda$.

Let $n = [\Lambda : \Lambda(\downY)]$.
If we lift the tripod $\downY$ from $\C / \Lambda$
to $\C / \Lambda(\downY)$, 
then the preimage can be expressed as a union of $n$ tripods 
$\downY_1, \downY_2, \ldots, \downY_n$
which are translates of each other.
We claim that for each $i\neq j$,
the intersection $\downY_i \cap \downY_j$ consists of exactly two points
in the covering torus $\C / \Lambda(\downY)$.
To verify this claim, first observe that 
$\downY_i \cap \downY_j = \downY_i \cap (\downY_j+\epsilon)$
as $\epsilon$ varies over a small neighborhood $U$ of zero in $\RR^2$,
with the neighborhood chosen such that the intersection remains transverse.
By moving the copy $\downY_j$ toward $\downY_i$, along a path that keeps the intersection transverse, 
we have
$\downY_i \cap \downY_j = \downY_i \cap (\downY_i+\epsilon)$.
Finally, we verify that 
$\downY_i \cap (\downY_i + \epsilon) = 2$
when intersecting a tripod with a small translate of itself,
as demonstrated in Figure~\ref{fig:tripod-intersection}.
\begin{figure}[h]
\centering
\begin{tikzpicture}
\draw (0,0) -- (-2,0);
\draw (0,0) -- (60:1);
\draw (0,0) -- (-60:2) -- (-30:3.46);
\draw (-60:2) -- (0.5,-2.60);
\draw[dashed] (0,-0.2)+(0,0) -- +(-2,0);
\draw[dashed] (0,-0.2)+(0,0) -- +(60:1);
\draw[dashed] (0,-0.2)+(0,0) -- +(-60:2) -- +(-30:3.46);
\draw[dashed] (0,-0.2)+(-60:2) -- +(0.5,-2.60);
\end{tikzpicture}
\caption{Intersection of $\downY_i$ and $\downY_i+\epsilon$.}
\label{fig:tripod-intersection}
\end{figure}
It follows that 
$\downY_i \cap \downY_j = 2$ as claimed.

From this claim, it follows that $\downY = \cup_i \downY_i$ has $2\binom{n}{2} = n(n-1)$ self-intersections on the torus $\C / \Lambda(\downY)$.
The quotient map  $\C/ \Lambda(\downY) \to \C/ \Lambda$ to the original torus
has degree $n$,
so this implies that the tripod on $\C / \Lambda$ has $n-1$ self-intersections.
\end{proof}

\begin{Prop}\label{prop:regions}
Suppose $\downY$ is a tripod on the torus $\T = \C / \Lambda$
with only transverse self-intersections, with index $n$.
Then the complement $\T \setminus \downY$
consists of $n$ connected regions.
\end{Prop}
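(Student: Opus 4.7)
The plan is to apply Euler's formula to the CW decomposition of $\T$ induced by $\downY$, using Proposition~\ref{prop:intersections} to count vertices and edges. Promoting the $n-1$ self-intersections to vertices, I have
\[
V = 2 + (n-1) = n + 1, \qquad E = 3 + 2(n-1) = 2n + 1,
\]
since each transverse crossing subdivides two existing edges. Euler's formula $V - E + F = \chi(\T) = 0$ then gives $F = n$, which equals the number of components of $\T \setminus \downY$ provided each component is an open disk.

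To verify the disk property, I will lift to the intermediate cover $\pi : \C / \Lambda(\downY) \to \T$ of degree $n$ used in the proof of Proposition~\ref{prop:intersections}, where the preimage of $\downY$ decomposes as embedded theta graphs $\downY_1 \cup \cdots \cup \downY_n$. A single $\downY_i$ has complement equal to one open hexagonal disk (the fundamental-domain picture recalled in Section~\ref{sec:intro}, equivalent to the Euler count $2 - 3 + F = 0$ forcing $F = 1$). Overlaying each subsequent $\downY_j$ transversely subdivides existing disk regions into smaller disks by the Jordan curve theorem; inductively, every complementary region in the cover is an open disk, and a parallel Euler computation with $V' = 2n + n(n-1)$ and $E' = 3n + 2n(n-1)$ yields $F' = n^2$ such disks.

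Finally, I descend the count via the deck-group action. The deck group $\Lambda / \Lambda(\downY)$ has order $n$ and acts on the cover by fixed-point-free translations. If a non-trivial element $\sigma$ of order $k > 1$ stabilized a disk $D$ in the cover, the quotient $D / \langle \sigma \rangle$ would be a surface with $\chi = 1/k \notin \Z$, a contradiction; so the deck group acts freely on the $n^2$ disks, producing $n^2 / n = n$ orbits, each projecting homeomorphically to a single open disk in $\T \setminus \downY$.

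The main obstacle will be the inductive step showing that transversely adding a theta graph $\downY_j$ to an existing system of disks preserves the disk property. While the Jordan curve theorem handles a single transverse arc spanning two boundary points, bookkeeping the sub-arcs of $\downY_j$ — which may carry internal vertices (the tripod point and lattice endpoint of $\downY_j$) as well as crossings with earlier $\downY_i$ — within each existing disk component will require some care.
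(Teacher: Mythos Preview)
Your argument is correct and shares the paper's core strategy: impose the CW structure, count $V=n+1$ and $E=2n+1$, and read off $F=n$ from $\chi(\T)=0$. The only substantive difference is in justifying that the complementary regions are open disks. The paper dispatches this in one line by lifting to the universal cover $\C$ and observing that each component has bounded lifts, hence is simply connected. You instead work in the intermediate cover $\C/\Lambda(\downY)$, build up the disk decomposition inductively by overlaying the $\downY_j$ one at a time, and then descend by a free deck-group action on the $n^2$ disks. Your route is more labor-intensive but entirely sound; the inductive step you flag as an obstacle is fine once you note that $\downY_j \cap D$ is a properly embedded forest in each existing disk $D$ (any cycle of $\downY_j$ is homologically nontrivial, so cannot sit inside a disk), and a forest always cuts a disk into disks. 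The paper's universal-cover observation buys brevity; your argument buys an explicit count $F'=n^2$ upstairs, which is a pleasant consistency check with the degree-$n$ covering.
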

\begin{proof}
The tripod $\downY$ induces a cell structure on $\T$ as follows.
The vertices ($0$-cells) are the two tripod points of $\downY$ and all self-intersection points.
The edges ($1$-cells) are the segments of $\downY$ after subdividing along vertices,
and the faces ($2$-cells) are the connected components of $\T \setminus \downY$.
Note that the components of $\T \setminus \downY$ are simply connected,
because their lifts to the universal cover $\C$ are bounded.
By Proposition~\ref{prop:intersections},
the number of vertices in this cell decomposition is $2 + (n-1) = n+1$.
To compute the number of edges,
we observe that two vertices have degree $3$ while the other $n-1$ vertices have degree $4$.
Thus 
$$
c_1 = \frac12 \sum_{v} \deg(v) = \frac12( 3\cdot 2 + 4 (n-1)) = 2n + 1 .
$$
Finally, we know that the Euler characteristic of $\T$ is zero,
so 
$$
c_2 = c_1 - c_0 = 2n + 1 - (n+1) = n,
$$
as claimed.
\end{proof}

\subsection{Counting tripods by spanning lattice}
Given a tripod $\downY$, recall that $\Lambda(\downY)$ denotes the minimal lattice which contains the endpoints of $\downY$;
we call $\Lambda(\downY)$ the {\em spanning lattice}  of $\downY$.

The association of $\downY$ to $\Lambda(\downY)$
defines a map
$$
(\text{tripods in $\Lambda$}) \to (\text{sublattices of }\Lambda).
$$
This map is surjective, but not injective.
For a fixed sublattice $\Lambda_0 \subset \Lambda$,
the set of tripods
$$
\{ \downY : \Lambda(\downY) = \Lambda_0\}
$$
is finite, but as $\Lambda_0$ varies the size of this preimage is unbounded.
In particular, the size of this preimage grows arbitrarily large asymptotically in proportion to the ratio
$$
(\text{length of second-shortest vector in }\Lambda_0 ) / (\text{length of shortest vector in }\Lambda_0) .
$$

\begin{Prop}
Given a lattice $\Lambda$ in $\C$,
there are finitely many tripods $\downY$ 
such that $\Lambda(\downY) = \Lambda$.
\end{Prop}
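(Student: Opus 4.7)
The plan is to translate the finiteness claim into a statement about $\Z$-bases of $\Lambda$ whose spanned triangle has all angles at most $2\pi/3$, and then derive uniform bounds on the side lengths of that triangle from the combination of the area constraint (coming from $\{z,w\}$ being a basis) and the angle constraint (coming from the existence of an inscribed tripod).

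First, I would lift $\downY$ to $\C$ as in \S\ref{sec:lifting}, so its three endpoints become $0, z, w \in \Lambda$. The assumption $\Lambda(\downY) = \Lambda$ is then equivalent to $\{z, w\}$ being a $\Z$-basis of $\Lambda$. In particular, each of $|z|, |w|, |z-w|$ is at least $\mu_1(\Lambda)$, the length of a shortest nonzero vector in $\Lambda$, and the area of $\Delta(0, z, w)$ equals $\tfrac{1}{2}\covol(\Lambda)$.

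Second, by Theorem~\ref{thm:tripod-triangle} every interior angle of $\Delta(0, z, w)$ is at most $2\pi/3$. Label the side lengths $a \leq b \leq c$ and the angle opposite $c$ by $\gamma$. Since $\gamma$ is the largest of three angles summing to $\pi$, we also have $\gamma \geq \pi/3$, so $\gamma \in [\pi/3, 2\pi/3]$ and $\sin \gamma \geq \sqrt{3}/2$. The area identity $\tfrac{1}{2} ab \sin\gamma = \tfrac{1}{2}\covol(\Lambda)$ then gives $ab \leq \tfrac{2}{\sqrt{3}}\covol(\Lambda)$, and dividing by $a \geq \mu_1(\Lambda)$ yields an absolute upper bound on $b$. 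Applying the law of cosines with $\cos\gamma \geq -1/2$ together with $a \leq b$ gives $c^2 \leq a^2 + b^2 + ab \leq 3b^2$, so $c \leq \sqrt{3}\, b$ as well. Thus $|z|, |w|, |z-w|$ are all bounded above by a constant depending only on $\Lambda$.

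Since $\Lambda$ is discrete, only finitely many pairs $(z, w) \in \Lambda \times \Lambda$ satisfy these bounds, and each tripod corresponds to at most finitely many such pairs (via the choice of which endpoint to label $0$ and which of the other two to call $z$). This gives the desired finiteness. The one subtlety worth flagging is that ``every angle $\leq 2\pi/3$'' alone does not bound the ratios of the sides; the key additional observation is that the \emph{largest} angle is then forced into $[\pi/3, 2\pi/3]$, where $\sin$ is bounded away from zero. This is precisely what converts the fixed lattice area into a uniform side-length bound, and it is the only nontrivial step in the argument.
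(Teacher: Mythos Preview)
Your proposal is correct and follows essentially the same approach as the paper: both arguments reduce to bounding the side lengths of a triangle $\Delta(0,z,w)$ with fixed area $\tfrac{1}{2}\covol(\Lambda)$ and all angles at most $2\pi/3$, and both hinge on the observation that the largest angle then lies in $[\pi/3,2\pi/3]$ so that $\sin$ of it is at least $\sqrt{3}/2$, turning the area identity into a bound on the product of the two shorter sides. The paper normalizes so that the largest angle is at $0$ and thus only needs to bound $|z|$ and $|w|$, whereas you bound all three sides via an additional law-of-cosines step; this is a cosmetic difference, and your version is arguably a bit more explicit.
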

\begin{proof}
Without loss of generality, assume that $\Lambda$
has unit covolume.
Say a triangle with endpoints in $\Lambda$ is a {\em unit triangle} if it has area $1/2$.
By Theorem~\ref{thm:tripod-triangle},
it suffices to show that $\Lambda$ contains finitely many unit triangles with angles at most $2\pi /3$, up to translation so that one triangle vertex is at the origin.

Suppose $\Delta(0,z,w)$ is a unit triangle with angles at most $2\pi/3$.
If $\theta$ is the angle of the triangle at $0$,
then the triangle area satisfies
\begin{equation*}
\frac12 |z||w| \sin\theta = \mathrm{area}(\Delta(0,z,w)) = \frac12.
\end{equation*}
Up to translation, we may assume 
that the largest angle of $\Delta(0,z,w)$ is at $0$,
so that $\theta \geq \pi/3$,
and our intial hypothesis is that $\theta \leq 2\pi/3$.
Therefore $\sin\theta \geq \sqrt{3}/2$,
which implies
\begin{equation*}
|z||w| = \frac1{\sin \theta} \leq \frac{2}{\sqrt 3}.
\end{equation*}
Let $L = \min \{ |z| : z\in \Lambda,\, z\neq 0\}$.
The above bound implies that $z$ and $w$ lie in the set $\{ z \in \Lambda : |z| \leq \frac{2}{ \sqrt 3 L}\}$
which is finite.
This verifies that there are finitely many unit triangles in $\Lambda$ up to translation with angles at most $2\pi/3$.
\end{proof}

\section{Lattice point counting}\label{sec:lattice}

\subsection{Lifting}
\label{sec:lift} 
We now describe how to turn our counting problem for tripods on the torus into a lattice point counting problem in $\C^2$. Given a tripod $\downY$ on $\T = \C/\Lambda$, we fix a lift to $\C$ by 
choosing the center point $\tilde p$ to lie in the sector
$0 \leq \arg(\tilde p) < 2\pi /3$.
The lifted tripod will have one endpoint at $0$.
Denote the other endpoints by $z, w \in \Lambda$, with $\arg(z) < \arg(w)$. 
(By $\arg(z) < \arg(w)$, we mean $\arg(z) < \arg(w) < \pi + \arg(z)$.)
See Figure~\ref{fig:tripod-ambiguity} for an example of determining a lift.
\begin{figure}[h]
\begin{tikzpicture}[scale = 2.0]
\draw(-1.5,0)--(1.5,0);
\draw(0, -1.5)--(0,1.5);
\node[right] at (1,1) {$w=i$};
\node[below] at (1,0) {$z=1$};
\node[above left] at (1-0.2,0.2) {$\tilde p$};

\draw[thick, blue](0,0)--(-0.21325, 0.21325);
\draw[thick, blue](1,0)--(1-0.21325, 0.21325);
\draw[thick, blue](0,-1)--(-0.21325, -1+0.21325);
\draw[thick, green](-1,0)--(-0.21325, 0.21325);
\draw[thick, green] (0,0) -- (1-0.21325, 0.21325);
\draw[thick, green](-1,-1)--(-0.21325, -1+0.21325);
\draw[thick, red] (0,1)--(-0.21325, 0.21325);
\draw[thick, red] (1,1)--(1-0.21325, 0.21325);
\draw[thick, red] (0,0) -- (-0.21325, -1+0.21325);

\end{tikzpicture}
\caption{Tripod $\downY$ lifted to $\C$, with endpoints $0,z,w$.
The lift with $\arg(\tilde p) \in [0,2\pi/3)$ is labelled.}
\label{fig:tripod-ambiguity}
\end{figure}
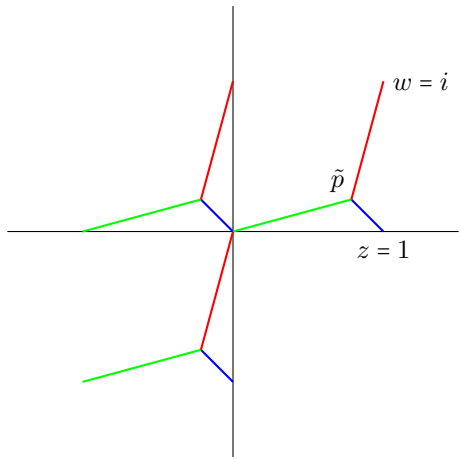


\subsection{Angle bound}\label{sec:angle} 
By Theorem~\ref{thm:tripod-triangle}, a necessary and sufficient condition for $0,z,w$ to be endpoints of a tripod is that
\begin{equation}\label{eq:angle}
\text{the angles 
of the triangle $\Delta(0,z,w)$ are at most $2\pi/3$.}
\end{equation}


\subsection{Length bound}\label{sec:length} 
Finally, if we want $\ell(\downY(z, w)) \le R$, we need, by Lemma~\ref{lemma:length}, \begin{equation}\label{eq:length} |z e^{i\pi/3} + we^{-i\pi/3}| \le R\end{equation}

\noindent Putting \eqref{eq:angle} and \eqref{eq:length} together, we obtain:

\begin{lemma}\label{lemma:latticepoint} 
Suppose $\Lambda = \ZZ + \ZZ\tau$ is a lattice in $\CC \cong \RR^2$ with $\text{Im}(\tau) >0$.
The number of primitive tripods
$N_{\downY}(R,\Lambda)$ is given by the number of pairs $(z, w) \in \Lambda^2$ satisfying the following conditions:

\begin{equation}\label{eq:counting} 
\begin{gathered}
 z = a + b \tau, \quad w = c + d\tau,\quad \gcd(a,b,c,d) = 1, \\ 
 \arg(z) < \arg(w), \\ 
 \Delta(0,z,w) \text{ has all angles } \leq 2\pi/3, \\
 |z e^{i\pi/3} + we^{-i\pi/3}| \le R
 \text{ and } 0 \leq \arg(e^{i \pi/3}z + e^{-i \pi /3}w) < 2\pi /3 .
\end{gathered}
\end{equation}
\end{lemma}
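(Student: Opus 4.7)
The goal is a bijection between primitive tripods $\downY$ on $\T$ with $\ell(\downY) \le R$ and pairs $(z,w) \in \Lambda^2$ satisfying (4.3). The forward map is the lifting procedure of Section~\ref{sec:lift}, and the inverse is the Toricelli construction of Theorem~\ref{thm:tripod-triangle}.

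First, I would show the lift is well-defined. Lifting any tripod on $\T$ to $\C$ yields a tripod with center $\tilde p$ and endpoints $P_1,P_2,P_3 \in \Lambda$. Translating so that some $P_i$ becomes $0$ shifts the center to $\tilde p - P_i$; since the three vectors from $\tilde p$ to the $P_i$'s meet at angles $2\pi/3$, the three candidate positions for the translated center have arguments differing by $2\pi/3$, so exactly one lies in $[0,2\pi/3)$. This uniquely determines which endpoint is sent to $0$, and the remaining two are then labeled $z,w$ by the convention $\arg(z) < \arg(w)$.

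Second, I would verify each condition of (4.3). The angle condition on $\Delta(0,z,w)$ is the necessary direction of Theorem~\ref{thm:tripod-triangle}, and the length bound $|z e^{i\pi/3} + w e^{-i\pi/3}| \le R$ together with $0 \le \arg(e^{i\pi/3}z + e^{-i\pi/3}w) < 2\pi/3$ follow directly from equations~\eqref{eq:tripodlength} and~\eqref{eq:tripod-arg} of Lemma~\ref{lemma:length} applied to our lift with $\tilde p \in [0,2\pi/3)$. Translating primitivity into $\gcd(a,b,c,d) = 1$ requires more care. If $k := \gcd(a,b,c,d) > 1$, then $z/k,\, w/k \in \Lambda$, and the Fermat point of the scaled triangle $\Delta(0, z/k, w/k)$ is exactly $\tilde p / k$. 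A key observation is that $\tilde p / k \notin \Lambda$: otherwise $\tilde p = k(\tilde p / k) \in \Lambda$, contradicting the tripod axiom $p \ne 0$ on $\T$. Hence $\downY / k$ projects to a bona fide tripod and $\downY$ is not primitive. Conversely, if $\downY = k \downY'$ for a tripod $\downY'$ with $k > 1$, then the endpoints of $\downY'$ lift to $z/k, w/k \in \Lambda$, forcing $k \mid \gcd(a,b,c,d)$.

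Third, for the inverse map, I would take any pair $(z,w)$ satisfying (4.3), apply Theorem~\ref{thm:tripod-triangle} to build the Fermat point $\tilde p$ inside $\Delta(0,z,w)$, connect it to the three vertices, and project to $\T$. The length condition gives $\ell(\downY) \le R$ by Lemma~\ref{lemma:length}, while the argument condition together with $\arg(z) < \arg(w)$ guarantee that this construction matches the normalization of the lift, so the two maps are mutual inverses. The main obstacle I anticipate is the primitivity translation in step two, where one must leverage the tripod axiom $\tilde p \notin \Lambda$ to rule out degenerate scaled-down configurations.
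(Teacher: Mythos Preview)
Your approach is correct and matches the paper's, which in fact offers no separate proof: the paper simply writes ``Putting \eqref{eq:angle} and \eqref{eq:length} together, we obtain'' and then states the lemma. You have filled in precisely the details the paper leaves implicit---the well-definedness of the lift normalization, the translation of primitivity into $\gcd(a,b,c,d)=1$, and the inverse construction via Theorem~\ref{thm:tripod-triangle}. One minor remark: your care in checking $\tilde p/k \notin \Lambda$ is not strictly needed, since Definition~\ref{def:tripod} does not forbid $p=0$ (that restriction appears only in the definition of \emph{reduced}), so the scaled-down configuration $\downY/k$ is automatically a valid tripod once $z/k,w/k\in\Lambda$; but your argument is correct either way.
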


\noindent 
The following corollary follows from standard lattice point counting results ~\cite[\S 24.10]{HardyWright}.

\begin{Cor}
\label{cor:latticepoint} 
$$
\lim_{R \rightarrow \infty} \frac{N_{\downY}(R,\Lambda)}{R^4} = \frac{1}{\zeta(4)}\vol(\Omega^{\downY}),
$$ 
where 
$$
\Omega^\downY = \left\{ (z,w) \in \C^2 : 
\begin{gathered}
 \arg(z) < \arg(w) < \pi + \arg(z) , \\
 \Delta(0,z,w) \text{ has all angles } \leq 2\pi/3, \\
|e^{i \pi/3}z + e^{-i \pi /3}w| \leq 1 
\text{ and } 0 \leq \arg(e^{i \pi/3}z + e^{-i \pi /3}w) < 2\pi /3
\end{gathered} \right\}.
$$
\end{Cor}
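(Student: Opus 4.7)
The plan is to reduce the corollary to the classical asymptotic for primitive lattice-point counts in a dilating bounded region of $\R^4 \cong \C^2$. First observe that every condition defining $\Omega^{\downY}$ other than the length bound $|e^{i\pi/3}z + e^{-i\pi/3}w| \leq 1$ is homogeneous of degree $0$ in $(z,w)$: the inequalities on $\arg(z), \arg(w)$ and on $\arg(e^{i\pi/3}z + e^{-i\pi/3}w)$ depend only on direction, and the triangle-angle condition on $\Delta(0,z,w)$ is also scale-invariant. Therefore the set of pairs $(z,w)\in\C^2$ satisfying every condition of Lemma~\ref{lemma:latticepoint} except primitivity is exactly the dilate $R\cdot\Omega^{\downY}$, and $N_{\downY}(R,\Lambda)$ equals the number of points of $\Lambda\times\Lambda$ in $R\cdot\Omega^{\downY}$ whose integer coordinates $(a,b,c,d)$ are primitive as a quadruple.

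Since $\Lambda$ has unit covolume in $\R^2$, the product lattice $\Lambda\times\Lambda$ has unit covolume in $\R^4$. The region $\Omega^{\downY}$ is bounded and semi-algebraic (its defining constraints translate to polynomial inequalities in the real and imaginary parts of $z$ and $w$), so its boundary is a finite union of real-analytic pieces of codimension at least one; in particular it is Lipschitz. The standard geometric lattice-point count then gives
$$
\#\bigl((\Lambda\times\Lambda)\cap R\cdot\Omega^{\downY}\bigr) = R^4\vol(\Omega^{\downY}) + O(R^3).
$$
To isolate the primitive count, I would then apply Möbius inversion on $k=\gcd(a,b,c,d)$: writing a nonzero quadruple as $k$ times a primitive quadruple and using the scale-invariance of the non-length conditions yields
$$
N_{\downY}(R,\Lambda) = \sum_{k=1}^{\infty} \mu(k)\cdot \#\bigl((\Lambda\times\Lambda)\cap (R/k)\cdot\Omega^{\downY}\bigr),
$$
with only finitely many nonzero terms. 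Substituting the asymptotic above and using $\sum_{k=1}^{\infty}\mu(k)/k^4 = 1/\zeta(4)$ and $\sum_{k\leq R}(R/k)^3 = O(R^3)$ gives
$$
N_{\downY}(R,\Lambda) = \frac{\vol(\Omega^{\downY})}{\zeta(4)}\,R^4 + O(R^3),
$$
and dividing by $R^4$ and letting $R\to\infty$ yields the claim.

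The main technical point is the error bound $O(R^3)$ in the unprimitive count, which requires enough regularity of $\partial\Omega^{\downY}$; this is exactly what the semi-algebraic description supplies, so there is no genuine obstruction. The Möbius-inversion bookkeeping is entirely routine, and no properties of $\Lambda$ beyond unit covolume are used. The actual calculation of $\vol(\Omega^{\downY})$, which produces the numerical constant $\tfrac{\sqrt 3\pi}{24}$ in Theorem~\ref{theorem:main}, is deferred to Section~\ref{sec:volumes}.
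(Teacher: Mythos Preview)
Your proposal is correct and follows essentially the same approach as the paper: identify $N_{\downY}(R,\Lambda)$ with a primitive lattice-point count in the dilate $R\,\Omega^{\downY}$ and invoke the classical asymptotic (the paper cites Hardy--Wright \S24.10 for this). You simply supply more detail---the homogeneity argument, the explicit M\"obius inversion, and the observation that $\Omega^{\downY}$ is bounded and semi-algebraic---whereas the paper's proof is a two-sentence appeal to the standard result; your regularity claim (semi-algebraic, hence piecewise real-analytic boundary) is in fact more accurate than the paper's ``smooth boundary,'' since the various defining inequalities intersect in corners.
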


\begin{proof} $N_{\downY}(R,\Lambda)$ counts primitive points in $\Lambda^2=(\Z+\Z\tau)^2$ in the dilated set $R\Omega^{\downY}$. $\Omega^{\downY}$ is a compact region with smooth boundary (in $\C^2 \cong \R^4$), and so by~\cite[\S 24.10]{HardyWright}, $N_{\downY}(R,\Lambda)$ is asymptotic to 
$$\frac{1}{\zeta(4)} \vol(\Omega^{\downY}) R^4,$$ 
where the factor of $1/\zeta(4)$ is the probability that a random integer vector $(x, y, u, v) \in \Z^4$ is primitive, that is, $\gcd(x, y, u, v) = 1$.
\end{proof}

\section{Volumes}
\label{sec:volumes} 
\noindent To finish the proof of Theorem~\ref{theorem:main}, we need to compute the volume of $\Omega^{\downY}$. This is given by:

\begin{lemma}\label{lemma:volume} 
\begin{equation}\label{eq:volomega} 
\vol(\Omega^{\downY}) = \frac{\sqrt 3}{24} \pi 
= \frac14 \cdot \frac{2\pi}{3} \cdot \frac{\sqrt 3}{4}
\end{equation}
\end{lemma}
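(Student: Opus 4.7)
The plan is to change variables to tripod coordinates adapted to the Fermat-point structure. I will parametrize $\Omega^{\downY}$ by a quadruple $(\ell_1, \ell_2, \ell_3, \theta)$ with $\ell_1, \ell_2, \ell_3 > 0$ equal to the three leg lengths of the tripod inscribed in $\Delta(0, z, w)$ and $\theta \in \R/2\pi\Z$ a rotation angle specifying its orientation. Explicitly, set the tripod center to be $\tilde p = -\ell_1 e^{i\theta}$, and let
$$
z = \tilde p + \ell_2 e^{i(\theta + 2\pi/3)}, \qquad w = \tilde p + \ell_3 e^{i(\theta - 2\pi/3)}.
$$
By Theorem~\ref{thm:tripod-triangle}, every triangle with largest angle at most $2\pi/3$ contains a unique inscribed tripod, so this map is a diffeomorphism onto the locus satisfying the ordering $\arg(z) < \arg(w)$ and the angle bound in the definition of $\Omega^{\downY}$.

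Next, I would translate the length and sector conditions on $T := e^{i\pi/3} z + e^{-i\pi/3} w$ into the new coordinates. Using $e^{i\pi/3} + e^{-i\pi/3} = 1$ together with $e^{i\pi/3} \cdot e^{2\pi i/3} = e^{-i\pi/3} \cdot e^{-2\pi i/3} = -1$, a short direct computation yields
$$
e^{i\pi/3} z + e^{-i\pi/3} w = -(\ell_1 + \ell_2 + \ell_3)\, e^{i\theta},
$$
which is consistent with Lemma~\ref{lemma:length}. Hence $|T| = \ell_1 + \ell_2 + \ell_3$ and $\arg(T) \equiv \theta + \pi \pmod{2\pi}$, so the remaining defining conditions of $\Omega^{\downY}$ become $\ell_1 + \ell_2 + \ell_3 \leq 1$ together with $\theta$ ranging over an arc of length $2\pi/3$.

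The next step is the Jacobian. Since varying $\theta$ acts on $(z, w)$ by rigid rotation of $\C^2 \cong \R^4$, the absolute Jacobian of the parametrization is independent of $\theta$; computing the $4 \times 4$ real determinant at $\theta = 0$ (a direct calculation using the three rotated vectors at angles $0, \pm 2\pi/3$) gives
$$
|J(\ell_1, \ell_2, \ell_3)| = \tfrac{\sqrt{3}}{2}\,(\ell_1 + \ell_2 + \ell_3).
$$
Assembling the pieces, $\vol(\Omega^{\downY})$ reduces to $\tfrac{2\pi}{3} \cdot \tfrac{\sqrt{3}}{2} \int (\ell_1 + \ell_2 + \ell_3)\, d\ell_1\, d\ell_2\, d\ell_3$ over the simplex $\{\ell_i > 0,\ \sum \ell_i \leq 1\}$. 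After the substitution $\ell = \sum \ell_i$, the inner triangular cross-section has area $\ell^2/2$, so the triple integral becomes $\int_0^1 \ell \cdot \tfrac{\ell^2}{2}\, d\ell = \tfrac{1}{8}$, yielding the stated value $\tfrac{\sqrt{3}\,\pi}{24} = \tfrac{1}{4} \cdot \tfrac{2\pi}{3} \cdot \tfrac{\sqrt{3}}{4}$ with the indicated factorization.

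The main obstacle is the Jacobian computation, which requires some care with signs and the three $2\pi/3$-rotations appearing as column entries of the $4 \times 4$ determinant. If I wanted to avoid the direct determinant, I would introduce $T$ itself as two of the four real coordinates on $\C^2$: by the conditions above, $T$ ranges over the $2\pi/3$-sector of the unit disk (contributing $\pi/3$), and a Fubini argument over the remaining one-complex-dimensional fibers would cleanly absorb the rest of the volume.
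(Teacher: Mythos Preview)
Your proof is correct. The Jacobian computation checks out (the $4\times 4$ determinant at $\theta=0$ indeed yields $-\tfrac{\sqrt3}{2}(\ell_1+\ell_2+\ell_3)$), and the simplex integral is handled properly.

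The paper takes a slightly different route: rather than parametrizing by leg lengths and computing a nonlinear Jacobian, it performs the \emph{linear} change of variables $(z,w)\mapsto(z,u)$ with $u=e^{i\pi/3}z+e^{-i\pi/3}w$, whose Jacobian is trivially of unit modulus. It then shows that for each fixed $u$ the $z$-fiber is an equilateral triangle of side $|u|$ (area $\tfrac{\sqrt3}{4}|u|^2$), and integrates $u$ over the $2\pi/3$-sector of the unit disk. This is exactly the alternative you describe in your last paragraph. The two approaches are equivalent under the hood: fixing $u$ amounts to fixing $\ell=\ell_1+\ell_2+\ell_3$ and $\theta$, and the paper's identification $z=a+be^{-i\pi/3}$ with $a,b\ge 0$, $a+b\le 1$ is your leg-length parametrization of the fiber. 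The paper's version buys you a cleaner bookkeeping step (no $4\times4$ determinant), while your direct parametrization makes the tripod geometry more transparent and packages the whole calculation into a single integral.
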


\begin{proof}[Proof of Theorem~\ref{theorem:main}]
Combining Corollary~\ref{cor:latticepoint} and Lemma~\ref{lemma:volume}, we obtain Theorem~\ref{theorem:main}.
\end{proof}

\subsection{Proof of Lemma~\ref{lemma:volume}} 
To prove Lemma~\ref{lemma:volume}, we will apply the following volume-preserving change of coordinates.
Let $\phi: \CC^2 \to \CC^2$
be defined by
$$
\phi \begin{pmatrix} z \\ w \end{pmatrix} 
= \begin{pmatrix} 1 & 0 \\
e^{i\pi /3} & e^{-i \pi/3} \end{pmatrix}
\begin{pmatrix} z \\ w \end{pmatrix} 
= \begin{pmatrix} z \\ e^{i \pi/3} z + e^{-i \pi /3} w \end{pmatrix} .
$$
The inverse map $\phi^{-1}$ with $u = e^{i \pi/3} z + e^{-i \pi /3} w$
is 
\begin{align*}
\phi^{-1} \begin{pmatrix} z \\ u \end{pmatrix} 
= \begin{pmatrix} 1 & 0 \\ e^{-i\pi/3} & e^{i\pi/3} \end{pmatrix}
\begin{pmatrix} z \\ u \end{pmatrix}
= \begin{pmatrix} z \\ e^{-i \pi/3}z + e^{i \pi/3} u \end{pmatrix}.
\end{align*}
If $\downY$ is a tripod with endpoints $0$, $z$, and $w$, 
and tripod point $p$,
then $u = e^{i \pi/3} z + e^{-i \pi /3} w$
is the third vertex of the equilateral triangle with 
vertices at $z$ and $w$.
We call $u$ the \emph{Toricelli point} of the tripod, (see Section~\ref{sec:fermat}).
By Lemma~\ref{lemma:length}, the point $u$ satisfies $\arg(u) = \arg(p)$
and $|u| = \ell(\downY) = |p| + |z-p| + |w-p|$.

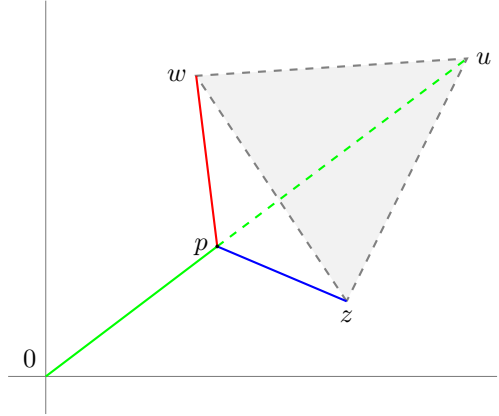
\begin{figure}[h]
\centering
\begin{tikzpicture}[scale=1.0]
\draw[gray] (-0.5,0) -- (6,0);
\draw[gray] (0,-0.5) -- (0,5);

\node[above left] at (0,0) {$0$};
\node[below] (z) at (4,1) {$z$};
\node[left] (w) at (2,4) {$w$};
\node[right] (u) at (5.60,4.23) {$u$};
\node[left] (p) at (2.28,1.73) {$p$};

\coordinate (z) at (4,1);
\coordinate (w) at (2,4);
\coordinate (p) at (2.28,1.73);
\coordinate (u) at (5.60,4.23);

\draw[thick, gray, dashed] (z) -- (w) -- (u) -- cycle;
\fill[gray,opacity=0.1] (z) -- (w) -- (u) -- cycle;

\draw[thick, blue] (p) -- (z);
\draw[thick, red] (p) -- (w);
\draw[thick, green] (0,0)--(p);
\draw[thick, green, dashed] (p) -- (u);
\filldraw (p) circle[radius=0.5pt];
\end{tikzpicture}
\caption{Tripod with endpoints $0,z,w$, and Toricelli point $u$.}
\end{figure}

Recall that $\Omega^\downY$ is defined as
$$
\Omega^\downY = \left\{ \twovector{z}{w} \in \CC^2 : \begin{gathered}
 \arg (z) < \arg(w) < \pi + \arg(z), \\
 \text{triangle } \Delta(0,z,w) \text{ has all angles }\leq 2\pi/3,  \\
|e^{i \pi/3}z + e^{-i \pi /3}w| \leq 1 
\text{ and } 0 \leq \arg(e^{i \pi/3}z + e^{-i \pi /3}w) < 2\pi /3
\end{gathered} \right\}
$$
Its image under $\phi$ is 
\begin{equation}
\phi(\Omega^\downY) = \left\{ \twovector{z}{u} \in \C^2 :
\begin{gathered}
\arg(z) < \arg(e^{-i \pi/3}z + e^{i \pi /3}u) < \pi + \arg(z) \\
\text{triangle } \Delta(0,z, e^{-i \pi/3}z + e^{i \pi /3}u) \text{ has all angles }\leq 2\pi/3, \\
|u| \leq 1, \quad 0 \leq \arg(u) < 2\pi/3 
\end{gathered}
\right\}.
\end{equation}

For $u\in \CC$ satisfying $|u|\leq 1$ and $0 \leq \arg(u) < 2\pi/3$, let
\begin{align*}
\phi(\Omega^\downY)_u &= \{ z \in \CC : (z,u) \in \phi(\Omega^\downY) \} \\
& = \left\{ z \in \CC : 
\begin{gathered}
0,\, z,\, \text{and}\, e^{-i \pi/3}z + e^{i \pi /3}u
\text{ are endpoints of a tripod} \\
\text{with }\arg(z) < \arg(e^{-i \pi/3}z + e^{i \pi /3}u) < \pi + \arg(z)
\end{gathered}
\right\}.
\end{align*}
and as a special case
\begin{equation}
\phi(\Omega^\downY)_1 = \left\{ z \in \C :
\begin{gathered}
0,\, z,\, \text{and}\, e^{-i \pi/3}z + e^{i \pi /3}
\text{ are endpoints of a tripod} \\
\text{with }\arg(z) < \arg(e^{-i \pi/3}z + e^{i \pi /3})
\end{gathered}
\right\}.
\end{equation}

\begin{lemma}
\label{lem:u-1-region}
The region $\phi(\Omega^\downY)_1$ 
is equal to the equilateral triangle 
$\Delta(0,1,e^{-i \pi/3}) \subset \CC$ (Fig.~\ref{fig:unit-triangle}).
\end{lemma}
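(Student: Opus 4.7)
The plan is to exploit the geometric meaning of the Torricelli point $u$. Applying the construction of Theorem~\ref{thm:tripod-triangle} with $A=0$, $B=z$, $C=w$, the quantity $u = e^{i\pi/3}z + e^{-i\pi/3}w$ is exactly the third vertex ``$F$'' of the equilateral triangle erected on side $zw$, on the side away from $0$. Consequently the tripod point $p$ lies on the straight segment from $0$ to $u$, and $|u| = \ell(\downY)$. Specialising to $u=1$ pins $p$ to the real interval $[0,1]$ and forces $\ell(\downY) = 1$.

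First I would parametrise. Writing $p = t$ with $t \in [0,1]$, the three spokes issuing from $p$ separate by $2\pi/3$; one of them runs from $p$ toward $0$ in direction $e^{i\pi}$, so the other two point in directions $e^{\pm i\pi/3}$. The ordering hypothesis $\arg(z) < \arg(w)$ combined with $p \in \R$ places $z$ on the spoke in direction $e^{-i\pi/3}$ and $w$ on the spoke in direction $e^{i\pi/3}$:
\[
z = t + r_1 e^{-i\pi/3}, \qquad w = t + r_2 e^{i\pi/3}, \qquad r_1,r_2 \ge 0.
\]
The length identity $\ell(\downY) = t + r_1 + r_2 = 1$ then confines $(t,r_1)$ to the closed right triangle
\[
T = \{(t,r_1) \in \R^2 : t \ge 0,\ r_1 \ge 0,\ t + r_1 \le 1\},
\]
with $r_2 = 1 - t - r_1$ determined.

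Next I would identify the image. The $\R$-affine map $(t,r_1) \mapsto t + r_1 e^{-i\pi/3}$ sends the vertices $(0,0),(1,0),(0,1)$ of $T$ to $0,1,e^{-i\pi/3}$, so $T$ maps bijectively onto the equilateral triangle $\Delta(0,1,e^{-i\pi/3})$.

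The main (and mildest) obstacle is checking that the parametrisation matches $\phi(\Omega^\downY)_1$ as a set, i.e.\ verifying both inclusions. For the forward direction, any $(t,r_1) \in T$ produces a valid inscribed tripod with spokes meeting at angles $2\pi/3$, so by Theorem~\ref{thm:tripod-triangle} the triangle $\Delta(0,z,w)$ has all angles at most $2\pi/3$; and $\operatorname{Im}(z) = -r_1 \sin(\pi/3) \le 0 \le r_2 \sin(\pi/3) = \operatorname{Im}(w)$ yields $\arg(z) < \arg(w) < \pi + \arg(z)$. Conversely, any $z \in \phi(\Omega^\downY)_1$ comes with a tripod point $p \in [0,1]$ by the Torricelli observation above, and the ordering hypothesis forces $z$ onto the spoke in direction $e^{-i\pi/3}$, exhibiting $z$ as the image of a unique $(t,r_1) \in T$. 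Together, these identify $\phi(\Omega^\downY)_1 = \Delta(0,1,e^{-i\pi/3})$.
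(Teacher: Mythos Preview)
Your proof is correct and follows essentially the same approach as the paper: both arguments parametrise by the tripod point $p=t$ (the paper uses $a$) on the real segment $[0,1]$ and the length $r_1$ (the paper uses $b$) of the leg toward $z$, obtain $z=t+r_1e^{-i\pi/3}$, and identify the simplex $\{t,r_1\ge 0,\ t+r_1\le 1\}$ with the equilateral triangle $\Delta(0,1,e^{-i\pi/3})$. Your version is slightly more explicit in verifying the ordering $\arg(z)<\arg(w)$ and in separating the two inclusions, but the substance is the same.
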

\begin{proof}
By definition $\phi(\Omega^\downY)_1$ consists of points $z$ such that
the triangle
$\Delta(0, z, e^{-i\pi/3}z + e^{i\pi/3})$ 
has all angles at most $2\pi/3$.
By Theorem~\ref{thm:tripod-triangle},
such a triangle has an inscribed tripod.
Using Lemma~\ref{lemma:length} with $w = e^{-i\pi/3}z + e^{i\pi/3}$
and the computation
$$
e^{i\pi/3}z + e^{-i\pi/3}w
= e^{i\pi/3}z + (e^{-2i\pi/3}z + 1)
= 1,
$$
such a tripod has length $1$ and has its tripod point on the positive real axis.

Such a tripod has the following description, illustrated in Figure~\ref{fig:tripod-unit}.
The center tripod point $p$ lies on the positive real axis, and let $a$ denote its distance from the origin.
The lower endpoint is $z$, and let $b$ denote its distance from $p$,
while the upper endpoint $w$ is at distance $1-a-b$ from $p$.
Each tripod leg has nonnegative length,
so we assume $0 \leq a,b \leq 1$ and $0 \leq a + b \leq 1$.
\begin{figure}[h]
\centering
\begin{tikzpicture}[scale=1.0]
\draw(-2,0) -- (2,0);
\draw(-1.5,-2) -- (-1.5,2);

\draw[thick, red](0,0)--(-1.5, 0)node[midway, above]{$a$};
\draw[thick, green](0,0)--(300:2)node[midway, right]{$b$};
\draw[thick, blue](0,0)--(60:1.6)node[midway, right]{$1-a-b$};

\node[right] at (300:2) {$z$};
\node[right] at (60:1.6) {$w$};
\node[below left] at (0,0) {$p$};
\end{tikzpicture}
\caption{A tripod in $\C$.}
\label{fig:tripod-unit}
\end{figure}

\noindent In such a tripod,
$
z = a + be^{-i\pi/3}.
$
The points
$$
\{ z = a + b e^{-i \pi /3} :  a \geq 0,\, b\geq 0,\, a+b\leq 1 \}
$$
are exactly those inside the equilateral triangle with endpoints
$0$, $1$, and $e^{-i\pi/3}$.


\begin{figure}[h]
\centering
\begin{tikzpicture}[scale=2.0]

\draw(-0.5,0) -- (1.5,0);
\draw(0,-1.2) -- (0,0.5);

\fill[gray!30] (0,0) -- (1,0) -- (300:1) -- cycle;

\draw[thick] (0,0) -- (1, 0)node[midway, above]{};
\draw[thick] (0,0) -- (300:1)node[midway, right]{};
\draw[thick] (1,0) -- (300:1)node[midway, right]{};

\end{tikzpicture}
\caption{The unit equilateral triangle $\Delta(0,1,e^{- i \pi /3})$ in $\C$.}
\label{fig:unit-triangle}
\end{figure}
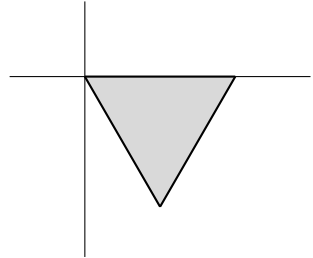

It follows that
$
\phi(\Omega^\downY)_1 = \Delta(0,1,e^{-i\pi/3})
$
as claimed.
\end{proof}


\begin{lemma}
\label{lem:u-region}
The region $\phi(\Omega^\downY)_u$ 
has area $\frac{\sqrt{3}}{4}|u|^2$.
\end{lemma}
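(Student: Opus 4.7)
The plan is to reduce the general case to Lemma~\ref{lem:u-1-region} by showing that the complex multiplication map $z \mapsto uz$ sends $\phi(\Omega^{\downY})_1$ bijectively onto $\phi(\Omega^{\downY})_u$. Since multiplication by $u \in \CC$, viewed as a map $\R^2 \to \R^2$, has Jacobian determinant $|u|^2$, and Lemma~\ref{lem:u-1-region} computes the area of $\phi(\Omega^{\downY})_1$ as $\sqrt{3}/4$, this will immediately yield the desired value $\frac{\sqrt 3}{4}|u|^2$.

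The key algebraic observation is the identity
\begin{equation*}
e^{-i\pi/3}(uz') + e^{i\pi/3} u \;=\; u\bigl(e^{-i\pi/3}z' + e^{i\pi/3}\bigr),
\end{equation*}
so the ordered triple $(0,\, uz',\, e^{-i\pi/3}(uz') + e^{i\pi/3}u)$ is exactly $u$ times the triple $(0,\, z',\, e^{-i\pi/3}z' + e^{i\pi/3})$ that appears in the definition of $\phi(\Omega^{\downY})_1$. I would then verify that both conditions defining $\phi(\Omega^{\downY})_u$ are invariant under such a common complex rescaling. The condition that $0, z, w$ form the endpoints of a tripod is, by Theorem~\ref{thm:tripod-triangle}, equivalent to all angles of the triangle $\Delta(0,z,w)$ being at most $2\pi/3$, and angles are preserved under any orientation-preserving similarity of $\CC$. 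The argument condition $\arg(z) < \arg(w) < \pi + \arg(z)$ is preserved because multiplying $z$ and $w$ by the common factor $u$ shifts both arguments by $\arg(u)$. Together these show that $z \mapsto uz$ restricts to a bijection $\phi(\Omega^{\downY})_1 \to \phi(\Omega^{\downY})_u$, and the area computation is finished by the Jacobian remark above.

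I do not anticipate any substantive obstacle here; the entire argument is a similarity-invariance check that reduces in a single step to the explicit unit-triangle area computed in Lemma~\ref{lem:u-1-region}. The only point requiring mild care is the degenerate case $u = 0$, which contributes a set of two-dimensional measure zero inside $\phi(\Omega^{\downY})$ and can be safely ignored for the purposes of area.
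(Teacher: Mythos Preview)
Your proposal is correct and follows essentially the same approach as the paper: both reduce to the $u=1$ case from Lemma~\ref{lem:u-1-region} by establishing the equivariance $\phi(\Omega^{\downY})_u = u\cdot \phi(\Omega^{\downY})_1$ and then applying the Jacobian factor $|u|^2$. The paper simply asserts this equivariance, whereas you supply the supporting details (the algebraic identity and the similarity-invariance of the angle and orientation conditions), so your write-up is if anything more complete.
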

\begin{proof}
When $u = 1$, the claim follows from Lemma~\ref{lem:u-1-region}.
For arbitrary $u$, the association $u \mapsto \phi(\Omega^\downY)_u$ is equivariant under multiplication,
i.e.
$$
\phi(\Omega^\downY)_u
= \{ uz : z \in \phi(\Omega^\downY)_1\} .
$$
The region $\phi(\Omega^\downY)_u$ has real dimension $2$,
so it follows that
$$
\vol(\phi(\Omega^\downY)_u) = |u|^2 \vol(\phi(\Omega^\downY)_1) 
= \frac{\sqrt{3}}{4} |u|^2
$$
as claimed.
\end{proof}

\begin{proof}[Proof of Lemma~\ref{lemma:volume}]
First, note that 
$\vol(\Omega^\downY) = \vol(\phi(\Omega^\downY))$
since $\phi$ has unit-magnitude Jacobian:
$$
|Jac(\phi)| = 
\left| \det \begin{pmatrix}
1 & 0 \\
e^{i\pi /3} & e^{-i\pi/3}
\end{pmatrix} \right|
= 1 .
$$
Then to compute $\phi(\Omega^\downY)$:
we slice the region according to the $u$-coordinate.
$$
\vol(\phi(\Omega^\downY)) = \int\int_{(z,u) \in \phi(\Omega^\downY)} dz\,du
= \int_{u} \vol(\phi(\Omega^\downY)_u) du.
$$

From the definition of $\phi(\Omega^\downY)$,
it is clear that the region $\phi(\Omega^\downY)_u$ is nonempty only if
$$
u = r e^{i \theta} 
\quad \text{where } 0 \leq r \leq 1 \text{ and } 0 \leq \theta \leq 2\pi/3 .
$$
Using this substitution $u = re^{i\theta}$ and 
the identity $\vol(\phi(\Omega^\downY)_u) = \frac{\sqrt{3}}{4} |u|^2 = \frac{\sqrt{3}}{4} r^2$ from Lemma~\ref{lem:u-region},
we have
\begin{align*}
\vol(\phi(\Omega^\downY)) = \int_{\theta=0}^{2\pi/3} \int_{r=0}^1 \frac{\sqrt 3}{4} r^2 \cdot r\,dr\, d\theta
= \frac{\sqrt 3}{4} \int_{\theta=0}^{2\pi/3}  d\theta\int_{r=0}^1 r^3 \,dr
= \frac{\sqrt 3}{4} \cdot \frac{2\pi}{3} \cdot \frac1{4}
= \frac{\sqrt 3 \pi}{24}
\end{align*}
as claimed.
\end{proof}

\section{Non-reduced Tripods}\label{sec:NonRedTripods}

\begin{proof}[Proof of Thm.~\ref{theorem:AEreduced}]
We will consider a lattice with a non-reduced tripod and show that the real and imaginary parts of the complex number determining the lattice are related by an equation.  From this, we conclude that a lattice admitting a non-reduced tripod must lie in a countable union of positive codimension subset of the space of lattices.

Consider a lattice $\Lambda$ in $\bC$.  Consider a tripod in this lattice with the property that at least one of its legs has a lattice point on its interior.  We consider the following transformations.  Let $\ell_1$ be a leg that has a lattice point in its interior.  Without loss of generality, translate the lattice so that the endpoint of $\ell_1$ is the origin.  Next observe that rotating a tripod preserves the tripod property.  Therefore, we rotate the lattice $\Lambda$ about the origin to a new lattice such that $\ell_1$ lies in the positive real axis.  Next we scale the entire lattice so that one of its basis vectors is $1$ and we write $\Lambda' = \bZ \oplus \tau \bZ$, where $\tau = s + it$.  We make no claims that $\Lambda'$ has unit covolume.  From now on we work entirely with respect to this tripod in the lattice $\Lambda'$.

Let $z$ and $w$ be the other endpoints of the tripod.  Then there exist integers $a_z, b_z, a_w, b_w \in \bZ$ such that $z = a_z + b_z \tau$ and $w = a_w + b_w \tau$.  Furthermore, the tripod point is simply given by a real number $r_p$ and the assumption that $\ell_1$ has a lattice point on it implies $r_p \geq 1$.  The angle conditions on the tripod points imply that
$$\left(\frac{1}{2} + i \frac{\sqrt{3}}{2}\right) \left(a_z + b_z \tau - r_p \right) \in \bR$$
and
$$\left(\frac{1}{2} - i \frac{\sqrt{3}}{2}\right) \left(a_w + b_w \tau - r_p \right) \in \bR.$$
In particular, the imaginary parts of both quantities are $0$.  After multiplying by $2$, this yields the equations
$$b_z t + \sqrt{3}\left(a_z + b_z s - r_p \right) = 0$$
$$b_w t - \sqrt{3}\left(a_w + b_w s - r_p \right) = 0.$$
It is convenient to define $t = t' \sqrt{3}$ to get
\begin{eqnarray}
\label{ImPartsEqn}
b_z t' + a_z + b_z s - r_p = 0 \\
b_w t' - a_w - b_w s + r_p  = 0.
\end{eqnarray}
Adding these equations yields
$$(b_z + b_w)t' + (a_z - a_w) + (b_z - b_w)s = 0.$$

First we consider the case where $b_z + b_w \not= 0$.  In this case, $t' = q_1 + q_2 s$ for some $q_1, q_2 \in \Q$.  However, this implies that $\tau$, which defines the lattice $\Lambda$ is not free to be any point in the complex plane because its real and complex parts are related by an equation.  This completes the proof in this case.

Next, we consider the case where $b_z + b_w = 0$.  We claim that $b_z - b_w \not= 0$ because otherwise, we would have $b_z = b_w = 0$ and this would contradict the assumption that $0$, $z$ and $w$ form a tripod in $\Lambda$.  Therefore, we conclude that $s \in \Q$ in which case we have again reduced to a measure zero subset of the space of lattices.
\end{proof}

\subsection{Many nonreduced tripods}

In this section fix $\Lambda = \ZZ + \ZZ\zeta$ as the triangular lattice,
where $\zeta = e^{2\pi i/6}$ denotes a sixth root of unity.

\begin{proof}[Proof of Thm.~\ref{theorem:MaximalReduced}]
Consider a tripod $\downY$ in $\Lambda$
with endpoints at $0$, $z = a + b\zeta$, and $w = c + d\zeta$,
where $a,b,c,d$ are integers.
Assume that $\arg(z) < \arg(w)$.
Let $u$ denote the other vertex  of the equilateral triangle with vertices at $z$ and $w$.
the vertex $u$ satisfies
\begin{align*}
u &= \zeta z + \zeta^{-1} w 
= c\zeta^{-1} + d + a\zeta + b\zeta^2
= c(1 - \zeta) + d + a\zeta + b(\zeta - 1) \\
&= (-b + c + d) + (a + b - c)\zeta.
\end{align*}
This shows that $u \in \Lambda$.
The tripod point $p$ of $\downY$ is a positive real multiple of $u$.

Conversely, if $z, w, u$ are related as above and $z,u \in \Lambda$,
then it is straightforward to check that also $w\in \Lambda$.

Recall that a tripod $\downY$ is {\em nonreduced}
if its interior contains a lattice point.
If the leg $0p$ contains a lattice point in its interior,
then $0u \supset 0p$ also contains a lattice point so $u$ must be a nonprimitive
lattice point of $\Lambda$.
Conversely if $u$ is a nonprimitive lattice point so that $0u$ contains lattice points in its interior,
a sufficient condition for $0p$ to contain a lattice point in its interior is that
$$
\ell(0p) > \frac12 \ell(0u).
$$

Therefore as a lower bound for the number of nonreduced lattices,
we have
$$
\# \{ \text{nonreduced } \downY \text{ in }\Lambda\}
\geq  \sum_{\substack{\text{nonprimitive}\\ u \in \Lambda}}
\# \left\{ \text{tripods } \downY : 
\begin{gathered}
\text{tripod point } p \in 0u, \,
\ell(0p) > \frac12 \ell(0u), \\
0p\text{ is longest  leg of }\downY
\end{gathered}\right\}.
$$
The condition that $0p$ is the longest tripod leg is needed to avoid overcounting
on the torus $\RR^2 / \Lambda$.
Note that the length of the tripod $\ell(\downY)$ is equal to the distance $|u| = \ell(0u)$.

Suppose we fix some $u$ with $|u| = R$.
Then the set
$$
\{ \text{tripods } \downY : 
\begin{gathered}
\text{tripod point } p \in 0u
\end{gathered}\}
$$
is parametrized by choosing $w$ inside the equilateral triangle 
$\Delta(0,u,\zeta u)$ with side length $R$,
while the set
$$
\left\{ \text{tripods } \downY : 
\begin{gathered}
\text{tripod point } p \in 0u, \, 
0p \text{ is longest leg of }\downY
\end{gathered}\right\}
$$
is parametrized by choosing $w$ inside a subregion of the previous triangle
of one-third size, see Figure~\ref{fig:triangle-subset} (middle).
Finally the set
$$
\left\{ \text{tripods } \downY : 
\begin{gathered}
\text{tripod point } p \in 0u, \, 
0p \text{ is longest leg of }\downY, \\
\ell(0p) > \frac12 \ell(0u)
\end{gathered}\right\}
$$
is parametrized by choosing $w$ inside the subregion shown in 
Figure~\ref{fig:triangle-subset} (right), which has one-fourth the size of the original triangle.

Therefore for fixed lattice point $u$,
the number of tripods
\begin{align*}
\# \left\{ \text{tripods } \downY : 
\begin{gathered}
\text{tripod point } p \in 0u, \,
\ell(0p) > \frac12 \ell(0u), \\
0p\text{ is longest  leg of }\downY
\end{gathered}\right\}
&= \#\{ \text{lattice points in subregion of } \Delta(0,u, \zeta u) \} \\
&= \frac{\textrm{vol}(\text{triangle subregion})}{\textrm{covol}(\Lambda)}
+ O(R) \\
&= \frac{\sqrt{3} R^2 / 16}{\sqrt{3}/2} + O(R)\\
&= \frac{1}{8}R^2 + O(R).
\end{align*}
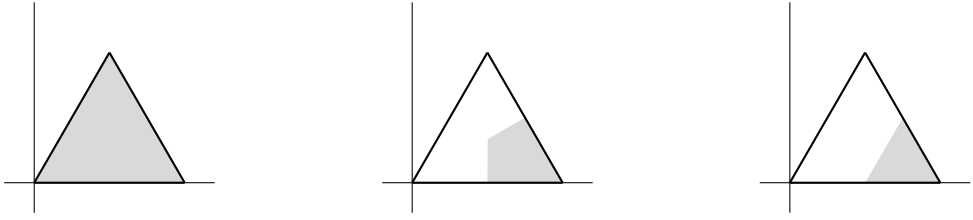
\begin{figure}[h]
\centering
\begin{minipage}[c]{0.3\textwidth}
\begin{tikzpicture}[scale=2.0]

\draw(-0.2,0) -- (1.2,0);
\draw(0,-0.2) -- (0,1.2);

\fill[gray!30] (0,0) -- (1,0) -- (60:1) -- cycle;

\draw[thick] (0,0) -- (1, 0)node[midway, above]{};
\draw[thick] (0,0) -- (60:1)node[midway, right]{};
\draw[thick] (1,0) -- (60:1)node[midway, right]{};

\end{tikzpicture}
\end{minipage}
\begin{minipage}[c]{0.3\textwidth}
\begin{tikzpicture}[scale=2.0]

\draw(-0.2,0) -- (1.2,0);
\draw(0,-0.2) -- (0,1.2);

\fill[gray!30] (0.5,0) -- (1,0) -- (30:0.87) -- (30:0.58) -- cycle;

\draw[thick] (0,0) -- (1, 0)node[midway, above]{};
\draw[thick] (0,0) -- (60:1)node[midway, right]{};
\draw[thick] (1,0) -- (60:1)node[midway, right]{};

\end{tikzpicture}
\end{minipage}
\begin{minipage}[c]{0.3\textwidth}
\begin{tikzpicture}[scale=2.0]

\draw(-0.2,0) -- (1.2,0);
\draw(0,-0.2) -- (0,1.2);

\fill[gray!30] (0.5,0) -- (1,0) -- (30:0.87) -- cycle;

\draw[thick] (0,0) -- (1, 0)node[midway, above]{};
\draw[thick] (0,0) -- (60:1)node[midway, right]{};
\draw[thick] (1,0) -- (60:1)node[midway, right]{};

\end{tikzpicture}
\end{minipage}
\caption{An equilateral triangle and relevant subregions.}
\label{fig:triangle-subset}
\end{figure}

Now it remains to sum over the possible choices of $u$.
Summing over all lattice points $u$,
we would have
\begin{align*}
\sum_{\substack{ u \in \Lambda \\ |u| \leq R}}\# \left\{ \text{tripods } \downY : 
\begin{gathered}
\text{tripod point } p \in 0u, \,
\ell(0p) > \frac12 \ell(0u), \\
0p\text{ is longest  leg of }\downY
\end{gathered} \right\}
&= 
\sum_{\substack{ u \in \Lambda \\ |u|  = r \leq R}} \frac18 r^2 + O(r) \\
&= \int_{r=0}^R \left( \frac{\pi}{4} r^3 + O(r^2) \right) dr \\
&= \frac{\pi}{16}R^4 + O(R^3).
\end{align*}
If we instead sum over only those $u$ which are nonprimitive lattice points,
our answer changes asymptotically by a constant factor
$$
1 - \zeta(2)^{-1} 
= \lim_{R \to \infty} \frac{\#\{u \in \Lambda : \, |u|\leq R,\, u \text{ nonprimitive}\}}{\#\{u \in \Lambda,\, |u|\leq R\}}
= 1 - \frac{6}{\pi^2} \approx 0.392.
$$
Therefore,
\begin{align*}
\# \{ \text{nonreduced } \downY \text{ in }\Lambda : \ell(\downY)\leq R \}
= \left(1 - \frac{6}{\pi^2}\right) \frac{\pi}{16} R^4 + O(R^3),
\end{align*}
so we may take any postive constant $C < (1 - \frac{6}{\pi^2}) \frac{\pi}{16} \approx 0.0770$.
\end{proof}

Note that the total number of tripods in $\Lambda$ satisfies
$$
\#\{ \downY \text{ in }\Lambda : \, \ell(\downY) \leq R\} \sim \frac{\pi}{12}R^4 .
$$
\begin{Cor}
The number of nonreduced, primitive tripods in $\Lambda$ satisfies
$$
\# \{ \downY \text{ nonreduced and primitive} : \ell(\downY) \leq R \} \geq C R^4
$$
for some positive constant $C$, for sufficiently large $R$.
\end{Cor}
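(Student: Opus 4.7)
The plan is to argue by subtraction: I would take the lower bound on all nonreduced tripods from Theorem~\ref{theorem:MaximalReduced}, subtract off the count of nonprimitive tripods (which the above bound unhelpfully includes), and verify that a strictly positive $R^4$ term remains.

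First, I would observe that every nonprimitive tripod is automatically nonreduced. Indeed, if $\downY = k\,\downY_0$ for a primitive tripod $\downY_0$ and integer $k \geq 2$, then each leg of $\downY$ contains the endpoints of $j\cdot\downY_0$ for $1 \leq j \leq k-1$ as interior lattice points. Consequently, the set of nonreduced tripods of length at most $R$ partitions as
$$
N_{nonred, prim}(R, \Lambda) = N_{nonred}(R, \Lambda) - \bigl( N(R, \Lambda) - N_{prim}(R, \Lambda) \bigr),
$$
where $N$ and $N_{prim}$ denote the counts of all tripods and of primitive tripods respectively.

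Next, I would assemble the relevant asymptotics. The proof of Theorem~\ref{theorem:MaximalReduced} gives $N_{nonred}(R,\Lambda) = \left(1 - \frac{6}{\pi^2}\right)\frac{\pi}{16}\, R^4 + O(R^3)$, and the note immediately preceding the Corollary states $N(R, \Lambda) \sim \frac{\pi}{12}\, R^4$. Via the canonical factorization $\downY = k\,\downY_0$ with $\ell(\downY) = k\,\ell(\downY_0)$ we have $N(R) = \sum_{k\geq 1} N_{prim}(R/k)$, and Möbius inversion combined with the monotonicity of $N_{prim}$ then yields $N_{prim}(R, \Lambda) \sim \frac{\pi}{12\,\zeta(4)}\, R^4$, so $N(R,\Lambda) - N_{prim}(R,\Lambda) \sim \frac{\pi}{12}\left(1 - \frac{90}{\pi^4}\right) R^4$. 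Substituting into the identity above produces
$$
N_{nonred, prim}(R, \Lambda) \geq \left[ \left(1 - \frac{6}{\pi^2}\right) \frac{\pi}{16} - \frac{\pi}{12}\left(1 - \frac{90}{\pi^4}\right) \right] R^4 + o(R^4),
$$
and the only (hence main) obstacle is to confirm the bracketed constant is strictly positive. After clearing denominators this reduces to the numerical inequality $360 > 18\pi^2 + \pi^4$, i.e., $360 > 275.06\ldots$, which is straightforward. Any positive $C$ smaller than the bracketed constant then satisfies the stated lower bound for all sufficiently large $R$.
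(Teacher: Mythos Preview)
Your overall strategy coincides with the paper's: both subtract the nonprimitive count from the nonreduced lower bound and check that a positive $R^4$ coefficient survives. The numerics are correct and match the paper's constant $(C_1+C_2-1)\frac{\pi}{12}$.

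However, your opening observation that ``every nonprimitive tripod is automatically nonreduced'' is false, and the justification you give does not hold. If $\downY_0$ has lifted endpoints $0,z_0,w_0$ and tripod point $\tilde p_0$, then the scaled tripod $k\downY_0$ has endpoints $0,kz_0,kw_0$ and tripod point $k\tilde p_0$; its legs are the segments from $k\tilde p_0$ to these three points, and they do \emph{not} pass through $jz_0$ or $jw_0$ for $1\le j\le k-1$ (those points are not collinear with $k\tilde p_0$ and the relevant endpoint). Concretely, in $\Lambda=\ZZ[i]$ the tripod with endpoints $0,2,2i$ has tripod point $2\tilde p_0=\bigl(\tfrac{2}{3+\sqrt3},\tfrac{2}{3+\sqrt3}\bigr)\approx(0.423,0.423)$, and one checks directly that none of its three legs meet a lattice point in their interior; this tripod is nonprimitive yet reduced. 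This is precisely the phenomenon the paper flags in the paragraph after Definition~\ref{def:tripod}, where it notes that for tripods (unlike for single lattice vectors) ``scaled copy'' and ``passes through a lattice point'' are not equivalent.

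The fix is immediate and is exactly what the paper does: drop the claimed equality and use the inclusion--exclusion inequality
\[
N_{nonred,prim}(R,\Lambda)\;\ge\;N_{nonred}(R,\Lambda)+N_{prim}(R,\Lambda)-N(R,\Lambda)\;=\;N_{nonred}(R,\Lambda)-\bigl(N(R,\Lambda)-N_{prim}(R,\Lambda)\bigr),
\]
which requires no relation between nonprimitive and nonreduced. Your subsequent computation then goes through unchanged.
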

\begin{proof}
The previous theorem showed that
\begin{align*}
\#\{ \text{nonreduced }\downY \text{ in }\Lambda : \, \ell(\downY) \leq R \} 
&\gtrsim \left( 1 - \frac{6}{\pi^2}\right) \frac{\pi}{16} R^4 \\
&\sim \left(1 -\frac{6}{\pi^2}\right) \frac{3}{4} \#\{\text{total }\downY :\, \ell(\downY)\leq R\},
\end{align*}
where the constant
$$
C_1 = \left(1 -\frac{6}{\pi^2}\right) \frac{3}{4} \approx 0.294 .
$$
We also know that the number of primitive tripods satisfies
\begin{align*}
\#\{ \text{primitive }\downY :\, \ell(\downY) \leq R\}
&\sim \zeta(4)^{-1} \#\{ \text{total }\downY :\, \ell(\downY) \leq R\}, 
\end{align*}
where
$$
C_2 = \zeta(4)^{-1} \approx 0.924 .
$$
Together these bounds imply that
\begin{align*}
\#\{ \text{nonreduced, primitive }\downY :\, \ell(\downY)\leq R\} 
&\gtrsim (C_1 + C_2 - 1) \#\{ \text{total }\downY,\,\ell(\downY)\leq R\} \\
&\sim (C_1 + C_2 - 1) \frac{\pi}{12} R^4 .
\end{align*}
We may take $C$ to be any positive constant less than $(C_1 + C_2 - 1) \frac{\pi}{12}$.
\end{proof}

\appendix
\section{Numerics}\label{sec:numerics} 

We obtained experimental evidence for Theorem~\ref{theorem:main} using the following elementary Sage code, which computes the number of tripods in $\Z[i]$ of length at most $R$.


\begin{Verbatim}
from itertools import product
from time import time

def is_prim(a,b,c,d):
    '''checks that a+bi and c+di are primitive'''
    return gcd(gcd(a,b), gcd(c,d))==1 
def is_longest(a,b,c,d):
    '''checks that the largest angle of the triangle with vertices at 0, a+bi, c+di
    is at 0, by checking that the length of the side opposite 0,
    whose length is |(a-c) + (b-d)i|, is longest.'''
    return min(a^2+b^2, c^2+d^2) > 2*a*c+2*b*d
def is_positively_oriented(a, b, c, d):
    '''checks that the triangle with vertices at 0, a+bi, c+di is positively oriented'''
    return a*d-b*c > 0
def is_tripod(a,b,c,d):
    '''checks that the triangle with vertices at 0, a+bi, c+di admits a tripod'''
    return is_longest(a, b, c, d) and RR(2*a*c+2*b*d+ sqrt((a^2+b^2)*(c^2+d^2)))>0
def tripod_length_squared(a,b,c, d, R):
    '''computes the length-squared of the tripod'''
    return RR((a-c)^2 + (b-d)^2 + a*c + b*d + sqrt(3)*(a*d-b*c)) < RR(R^2)
def tripod_counts(R): #guess is 0.20947986097*R^4 = RR(15*sqrt(3)/(4*pi^3)) R^4
    '''returns the length of the list of tripods of length at most $R$'''
    L=[(a,b, c, d) for (a,b,c,d) in product(range(-1.5*R, 1.5*R), repeat=4)
       if is_prim(a,b,c,d) and is_longest(a,b,c,d) and is_positively_oriented(a,b,c,d)]
    return len(L)
\end{Verbatim}

\noindent For $R=35,$ this yields

\begin{Verbatim}
tripod_counts(35) = 312488
	\end{Verbatim}

\noindent For comparison, $$\left|\frac{312488}{(35)^4} - \frac{15\sqrt{3}}{4\pi^3}\right| = 0.00124129370635984\ldots.$$ We make no claims that this Sage code is particularly efficient.
\bibliographystyle{alpha}
\bibliography{TripodsTorus}{}

\end{document}